\theoremstyle{plain}
\newtheorem{thm}{Theorem}[section]
\newtheorem{lem}[thm]{Lemma}
\newtheorem{prop}[thm]{Proposition}
\newtheorem{cor}[thm]{Corollary}
\newtheorem{rem}[thm]{Remark}
\title[Optimal extension of the Fourier transform and convolution operator]{Optimal extension of the Fourier transform and convolution operator on compact groups}
\author{Manoj Kumar}
\address{Department of Mathematics \endgraf Indian Institute of Technology Delhi \endgraf Delhi - 110016 \endgraf India}
\email{manojk9t3@gmail.com}
\author{N. Shravan Kumar}
\address{Department of Mathematics \endgraf Indian Institute of Technology Delhi \endgraf Delhi - 110016 \endgraf India}
\email{shravankumar@maths.iitd.ac.in}
\begin{document}

\begin{abstract}
Let $G$ be a compact group (not necessarily abelian) and let $\Phi$ be a Young function satisfying the $\Delta_2$-condition. We determine the optimal domain and the associated extended operator for both Fourier transform and the convolution operator defined on the Orlicz spaces $L^\Phi(G).$
\end{abstract}

\keywords{Compact group, Vector measure, Fourier transform, Convolution, Orlicz space}
\subjclass[2010]{Primary 43A15, 43A30, 46G10; Secondary 46E30, 43A77}

\maketitle

\section{Introduction}
Let $X$ be a $\sigma$-order continuous Banach function space \cite{ORP} defined over a finite positive measure space $(\Omega,\mathcal{A},\mu).$ If $Y$ is any Banach space and $T:X\rightarrow Y$ is a bounded linear operator, define $\nu_T:\mathcal{A}\rightarrow Y$ as $\nu_T(A)=T(\chi_A).$ It is well known that $\nu_T$ is a $\sigma$-additive vector measure. Further, by \cite[Pg. 194, Theorem 4.14]{ORP}, $X$ can be embedded continuously inside $L^1(\nu_T)$ and the mapping $I_{\nu_T} :L^1(\nu_T)\rightarrow Y$ given by $I_{\nu_T}(f)=\int_{\Omega}f\ d\nu_T$ is a continuous extension of $T.$ The mapping $I_{\nu_T}$ is called as the integration operator. Also, the space $L^1(\nu_T)$ is optimal in the sense that if $T$ extends continuously to a space $\widetilde{X}$ with values in $Y,$ then $\widetilde{X}$ can be embedded continuously inside $L^1(\nu_T).$

The determination of the optimal domain and the extension for operators arising from analysis is one of the classical problems of functional analysis. This optimal extension process is known for kernel operators \cite{CR1} and Sobolev embeddings \cite{CR2,EKP,KP}. 

In 2007, S. Okada and W. J. Ricker considered the optimal extension problem for the convolution operators arising from measures defined on compact abelian groups. Further, in \cite[Chapter 7]{ORP}, the authors have also studied the Fourier transform operator on compact abelian groups from the optimal domain view point. This paper aims to study the optimal domain, extension and their properties for both convolution operator and Fourier transform defined on a compact group. The initial domain for these operators are the Orlicz spaces on a compact group. In this way, these results are new even when the underlying group is abelian. 

In section 3, we study the Fourier transform operators while in section 4, we study the convolution operator and the compactness properties of the extended operator. 

\section{Preliminaries}

\subsection{Fourier analysis on compact groups}
Let $G$ be a compact Hausdorff group and let $m_G$ denote the normalized positive Haar measure on $G.$ For $1\leq p\leq\infty$,  $L^p(G)$ will denote the usual $p^{\mbox{th}}$-Lebesgue space with respect to the measure $m_G.$ It is well known that an irreducible unitary representation of a compact group $G$ is always finite-dimensional. Let $\widehat{G}$ be the set of all unitary equivalence classes of irreducible unitary representations of $G$. The set $\widehat{G}$ is called the unitary dual of $G$ and $\widehat{G}$ is given the discrete topology. 

Let $\{(X_\alpha,\|.\|_\alpha)\}_{\alpha\in\wedge}$ be a collection of Banach spaces. We shall denote by $\ell^1\mbox{-}\underset{\alpha\in\wedge}{\oplus}X_\alpha,$ the Banach space $\left\{(x_\alpha)\in\underset{\alpha\in\wedge}{\Pi}X_\alpha:\underset{\alpha\in\wedge}{\sum}\|x_\alpha\|_\alpha<\infty\right\}$ equipped with the norm $\|(x_\alpha)\|_\infty:=\underset{\alpha\in\wedge}{\sum}\|x_\alpha\|_\alpha.$ Similarly, we shall denote by $\ell^\infty\mbox{-}\underset{\alpha\in\wedge}{\oplus}X_\alpha,$ the Banach space $\left\{(x_\alpha)\in\underset{\alpha\in\wedge}{\Pi}X_\alpha:\underset{\alpha\in\wedge}{\sup}\|x_\alpha\|_\alpha<\infty\right\}$ equipped with the norm $\|(x_\alpha)\|_\infty:=\underset{\alpha\in\wedge}{\sup}\|x_\alpha\|_\alpha.$ In the same lines, we shall also denote by $c_0\mbox{-}\underset{\alpha\in\wedge}{\oplus}X_\alpha,$ the space consisting of those vectors $(x_\alpha)$ from $\ell^\infty\mbox{-}\underset{\alpha\in\wedge}{\oplus}X_\alpha$ which goes to $0$ as $\alpha\rightarrow\infty.$ It is clear that $c_0\mbox{-}\underset{\alpha\in\wedge}{\oplus}X_\alpha$ is a closed subspace of $\ell^\infty\mbox{-}\underset{\alpha\in\wedge}{\oplus}X_\alpha.$

Let $\pi$ be an irreducible unitary representation of $G$ on the Hilbert space $\mathcal{H}_\pi$ of dimension $d_\pi.$ For $f\in L^1(G),$ the Fourier transform of $f$ at $\pi$ denoted $\widehat{f}(\pi),$ is defined as $$\widehat{f}(\pi)=\int_Gf(t)\pi(t)^* \,dm_G(t),~[\pi]\in\widehat{G}.$$ Note that the Fourier transform operator $f\mapsto \widehat{f}$ maps $L^1(G)$ into $\ell^\infty\mbox{-}\underset{[\pi]\in\widehat{G}}{\oplus}\mathcal{B}_2(\mathcal{H}_\pi).$ This operator is injective and bounded. 
 
 For more details on compact groups, we refer to \cite{F}.

\subsection{Orlicz Spaces}
Let $ \Phi: [0,\infty]\rightarrow [0,\infty] $ be a convex function. Then $\Phi$ is called a Young function if it satisfies $ \Phi(0)= 0 $ and $\underset{x\rightarrow \infty}{\lim}  \Phi(x)= + \infty $. If $ \Phi$ is any Young function, then define $\Psi$ as
$$ \Psi(y):= \sup{\{xy-\Phi(x) : x\geq0\}} , y\geq 0.$$ Then $\Psi$ is also a Young function and is termed as the complementary function to $ \Phi.$ Further, the pair $ (\Phi,\Psi) $ is called a complementary pair of Young functions. A pair $(\Phi,\Psi)$ of complementary Young functions is said to be normalized if $\Phi(1)+\Psi(1)=1.$

We say that a  Young function $ \Phi $  satisfies  the $\Delta_{2}$-condition if there exists a constant $ K>0 $ and $ x_{0} > 0$ such that $\Phi(2x)\leq K\Phi(x)$ whenever $x\geq x_{0}.$

The Orlicz space, denoted $ L^{\Phi}(G),$ is a Banach space consisting of complex measurable functions on $G$ for which 
$$\|f\|_{\Phi} := \inf \left\{ k>0 :\int_G\Phi\left(\frac{|f|}{k}\right) dm_G \leq1 \right\}<\infty.$$ The above norm is called as the Luxemburg norm or Gauge norm.  If $(\Phi,\Psi)$ is a complementary Young pair, then there is a norm on $L^\Phi(G),$ equivalent to the Luxemburg norm, given by, $$ \|f\|_{\Phi}^o =\sup \Bigg \{ \int_{G}|fg|dm_G : \int_{G}\Psi(|g|)dm_G\leq1 \Bigg\}.$$ This norm is called as the Orlicz norm. Note that the Orlicz space is a natural generalisation of the classical $L^p$ spaces.

Let $\Phi$ be a Young function satisfying the $\Delta_2$-condition. Then the space of all continuous functions on $G$ is dense in $L^\Phi(G).$ Similarly, the space of simple functions is also dense in $L^\Phi(G).$ Further, the dual of $ (L^{\Phi}(G),\|\cdot\|_{\Phi}) $ is isometrically isomorphic to $ (L^{\Psi}(G),\|\cdot\|_{\Psi}^o).$ In particular, if both $\Phi$ and $\Psi$ satisfies the $\Delta_2$-condition, then $L^\Phi(G)$ is reflexive \cite[Pg. 112, Theorem 10]{RR}. Further, by \cite[Pg. 595, Theorem 133.3]{Z}, $L^\Phi(G)$ is a $\sigma$-order continuous Banach function space.

The following is an analogue of \cite[Pg. 380, Lemma 35.11]{HR}. The proof of this follows as in the case of $L^p$ spaces.
\begin{lem}\label{RNDOSF}
Let $\Phi$ be a Young function satisfying the $\Delta_2$-condition and let $\mu\in M(G)$ such that $$\sup\{\|\mu*f\|_\Phi:\|f\|_1\leq 1\}<\infty.$$ Then there exists $g\in L^\Phi(G)$ such that $d\mu=gdm_G.$
\end{lem}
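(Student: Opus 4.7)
The plan is to mimic the classical $L^p$ proof of Hewitt--Ross 35.11 by smoothing $\mu$ with an approximate identity to produce a bounded net in $L^\Phi(G)$ and then extracting a weak-$*$ limit; the non-reflexivity of $L^\Phi(G)$ (since $\Psi$ need not satisfy $\Delta_2$) will be circumvented by exhibiting $L^\Phi(G)$ as the dual of the ``nice'' part of $L^\Psi(G)$.

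Concretely, set $C := \sup\{\|\mu*f\|_\Phi : \|f\|_1\leq 1\}$ and fix a net $\{e_\alpha\}\subseteq C(G)$ with $\|e_\alpha\|_1\leq 1$ forming an approximate identity for convolution. By hypothesis each $\mu*e_\alpha$ lies in $L^\Phi(G)$ with $\|\mu*e_\alpha\|_\Phi \leq C$, so the net sits in the closed ball of radius $C$ in $L^\Phi(G)$. Let $E^\Psi(G)$ denote the closure of simple functions in $(L^\Psi(G),\|\cdot\|_\Psi)$; by standard Orlicz duality (independent of any $\Delta_2$ hypothesis on $\Psi$) one has $(E^\Psi(G))^*\cong L^\Phi(G)$ under the integration pairing. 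Banach--Alaoglu then yields a subnet $\mu*e_{\alpha_\beta}$ converging weak-$*$ to some $g\in L^\Phi(G)$ with $\|g\|_\Phi\leq C$.

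It remains to identify $g\,dm_G$ with $d\mu$. Since $G$ is compact, $C(G)\subseteq L^\infty(G)$ and continuous functions can be uniformly approximated by simple functions, so $C(G)\subseteq E^\Psi(G)$. For $\phi\in C(G)$, weak-$*$ convergence gives
\[
\int_G \phi(t)(\mu*e_{\alpha_\beta})(t)\,dm_G(t) \ \longrightarrow\ \int_G \phi\,g\,dm_G.
\]
On the other hand, Fubini's theorem rewrites the left-hand side as $\int_G \bigl(\int_G \phi(su)e_{\alpha_\beta}(u)\,dm_G(u)\bigr)\,d\mu(s)$, and uniform continuity of $\phi$ combined with the defining property of the approximate identity makes the inner integral converge uniformly in $s$ to $\phi(s)$; hence the expression converges to $\int_G \phi\,d\mu$. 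Thus $\int_G\phi\,g\,dm_G = \int_G\phi\,d\mu$ for every $\phi\in C(G)$, and the Riesz representation theorem for $M(G)=C(G)^*$ forces $d\mu = g\,dm_G$.

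The principal subtlety is the absence of reflexivity: one cannot directly invoke weak sequential compactness in $L^\Phi(G)$ as in the reflexive $L^p$ case. Recognising $L^\Phi(G)$ as $(E^\Psi(G))^*$ supplies the required weak-$*$ compactness, and checking that $C(G)\subseteq E^\Psi(G)$ is what allows Riesz duality to close the argument.
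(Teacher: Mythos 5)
Your argument is correct and is exactly the argument the paper is alluding to: the paper gives no proof of this lemma, saying only that it ``follows as in the case of $L^p$ spaces'' (Hewitt--Ross 35.11), and that classical proof is precisely your scheme --- smooth $\mu$ by an approximate identity to get a bounded net in the target space, extract a weak-$*$ cluster point, and identify it via Riesz representation. Your substitute for the reflexivity used in the $L^p$ case, namely the duality $(E^\Psi(G))^*\cong L^\Phi(G)$, is the right one and is the only nonroutine ingredient.

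One caveat is worth recording, though it is a defect of the lemma's statement rather than of your proof. The duality $(E^\Psi(G))^*\cong L^\Phi(G)$ requires $\Psi$ to be finite-valued, equivalently $\Phi(x)/x\to\infty$ as $x\to\infty$; the $\Delta_2$-condition alone does not guarantee this (e.g.\ $\Phi(x)=x$ satisfies $\Delta_2$, and then $L^\Psi(G)=L^\infty(G)$ and $(E^\Psi(G))^*\ne L^1(G)$). But in that degenerate case the lemma itself is false --- the hypothesis $\sup\{\|\mu*f\|_1:\|f\|_1\le 1\}<\infty$ holds for every $\mu\in M(G)$ --- which mirrors the exclusion of $p=1$ in Hewitt--Ross 35.11. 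So your proof is valid precisely on the range of $\Phi$ for which the statement can hold; you might state explicitly that you assume $\Phi(x)/x\to\infty$ (equivalently that $\Psi$ is finite). The only other points to polish are cosmetic: you want $e_\alpha\ge 0$ with $\|e_\alpha\|_1=1$ (not merely $\le 1$) and supports shrinking to the identity, so that $\int_G\phi(su)e_\alpha(u)\,dm_G(u)\to\phi(s)$ uniformly.
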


For more on Orlicz spaces see \cite{RR}.

\subsection{Vector measure}
Let $G$ be a compact group. Let $\mathcal{B}(G)$ denote the $\sigma$-algebra consisting of all Borel subsets of $G.$ Further, let $M(G)$ denote the space of all bounded complex Radon measures on $G.$ Let $X$ be a complex Banach space and let $\nu$ be a $\sigma$-additive $X$-valued measure called vector measure on $G.$ Let $X^\prime$ be the dual of $X$ and let $B_{X^\prime}$ be the closed unit ball in $X^\prime.$ For each $x^\prime\in X^\prime,$ we shall denote by $\langle\nu,x^\prime\rangle,$ the corresponding scalar valued measure for the vector measure $\nu,$ which is defined as $\langle\nu,x^\prime\rangle(A)=\langle\nu(A),x^\prime\rangle,\  A\in\mathcal{B}(G).$ A set $A\in\mathcal{B}(G)$ is said to be $\nu$-null if $\nu(B)=0$ for every Borel set $B\subset A.$ The variation of $\nu,$ denoted $|\nu|,$ is a positive measure defined as follows: For a set $A\in\mathcal{B}(G),$ $$|\nu|(A)=\sup\left\{\sum_{E\in\rho}\|\nu(E)\|: \rho\mbox{ the finite partition of A}\right\}.$$ The vector measure $\nu$ is said to be measure of bounded variation if $|\nu|(G)<\infty.$ The semivariation of $\nu$ on a set $A\in\mathcal{B}(G)$ is given by $\|\nu\|(A)=\underset{x^\prime\in B_{X^\prime}}\sup |\langle\nu,x^\prime\rangle|(A),$ where $|\langle\nu,x^\prime\rangle|$ is the total variation of the scalar measure $\langle\nu,x^\prime\rangle.$ Let $\|\nu\|$ denote the quantity $\|\nu\|(G).$ The vector measure $\nu$ is said to be {\it absolutely continuous with respect to a non-negative scalar measure $\mu$} if $\underset{\mu(A)\rightarrow 0}{\lim}\nu(A)=0,\ A\in\mathcal{B}(G).$ A Banach space $X$ is said to have the {\it Radon-Nikodym Property} with respect to $(G,\mathfrak{B}(G),m_G)$ if for each $X$-valued absolutely continuos vector measure $\nu$ of bounded variation there exists a Bochner integrable function $f:G\rightarrow X$ such that $d\nu=f\,dm_G.$

A complex valued function $f$ on $G$ is said to be $\nu$-weakly integrable if $f\in L^1(|\langle\nu,x^\prime\rangle|),$ for all $x^\prime\in X^\prime.$ We shall denote by $L^1_w(\nu)$ the Banach space of all $\nu$-weakly integrable functions equipped with the norm $$\|f\|_\nu=\underset{x^\prime\in B_{X^\prime}}{\sup}\int_G |f|\,d|\langle\nu,x^\prime\rangle|.$$ A $\nu$-weakly integrable function $f$ is said to be $\nu$-integrable if for each $A\in\mathcal{B}(G)$ there exists a unique $x_A\in X$ such that $\int_A f\,d\langle\nu,x^\prime\rangle=\langle x_A,x^\prime\rangle,\ x^\prime\in X^\prime.$ The vector $x_A$ is denoted by $\int_Af\,d\nu.$ We shall denote by $L^1(\nu)$ the space of all $\nu$-integrable functions and it is also a Banach space when equipped with the $\|\cdot\|_\nu$ norm. We denote by $I_\nu$ the continuous linear operator $I_\nu:L^1(\nu)\rightarrow X$ given by $I_\nu(f)=\int_Gf\,d\nu,\ f\in L^1(\nu)$. See \cite[Pg. 152]{ORP}. Note that the space of simple functions on $G$ is dense in $L^1(\nu).$

Let $T:L^\Phi(G)\rightarrow X$ be a bounded linear operator. Define $\nu_T:\mathcal{B}(G)\rightarrow X$ as $\nu_T(A)=T(\chi_A).$ Then $\nu_T$ is a vector measure on $G.$ The measure $\nu_T$ will be called as the vector measure associated with the operator $T.$ See \cite[Pg. 185]{ORP}.

For more details on vector measures and integration with respect to vector measures, we refer to \cite{DU, D, ORP}.

Throughout this paper $G$ will be a compact group and $m_G$ will denote the unique normalized Haar measure on $G.$ Further, $\Phi$ will always denote a Young function satisfying the $\Delta_2$-condition and $\Psi$ will be the Young function which is complementary to $\Phi.$ Also, $L^\Phi(G)$ will be considered only with the Luxemburg norm while its dual $L^\Psi(G)$ will be considered only with the Orlicz norm.

\section{Optimal extension of the Fourier transform}

In this section, we study the Fourier transform from two different view points. Firstly we restrict the Fourier transform operator on $L^1(G)$ to $L^\Phi(G)$ by treating $L^\Phi(G)$ as a subspace of $L^1(G).$ On the other hand, there is also a Fourier transform on the $L^\Phi(G)$ spaces arising from the Hausdorff-Young inequality. The optimal domain for these operators is studied in this section. The results obtained in this section are motivated by the results presented in \cite[Chapter 7]{ORP}.

\subsection{Optimal extension of $c_0$-valued Fourier transform}
Define $\mathscr{F}_{\Phi,0}:L^\Phi(G)\rightarrow c_0\mbox{-}\underset{[\pi]\in\widehat{G}}{\oplus}\mathcal{B}_2(\mathcal{H}_\pi)$ as $\mathscr{F}_{\Phi,0}(f)=\widehat{f}.$ Here $\widehat{f}$ denotes the Fourier transform of $f$ as an element of $L^1(G),$ since $L^\Phi(G)$ is contained in $L^1(G).$ Further, it is well known that $\mathscr{F}_{\Phi,0}$ is a bounded linear operator. We shall denote by $\nu_{\Phi,0}$ the associated vector measure.

\begin{lem}\label{Variation_of_nu}
The vector measure $\nu_{\Phi,0}$ has finite variation and the variation of $\nu_{\Phi,0}$ coincides with the Haar measure $m_G.$
\end{lem}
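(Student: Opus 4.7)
The plan is to prove the two-sided bound $m_G(E) \leq |\nu_{\Phi,0}|(E) \leq m_G(E)$ for every $E \in \mathcal{B}(G)$; this gives the equality of measures $|\nu_{\Phi,0}| = m_G$ on the Borel $\sigma$-algebra, and the finiteness of the variation follows immediately from $m_G(G) = 1$.

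\emph{Lower bound.} I isolate the contribution of the trivial one-dimensional representation $\pi_0 \in \widehat{G}$. Since $\pi_0 \equiv 1$, a direct computation gives
\[
\widehat{\chi_E}(\pi_0) = \int_E \pi_0(t)^*\, dm_G(t) = m_G(E),
\]
whose $\mathcal{B}_2$-norm on the one-dimensional space $\mathcal{H}_{\pi_0}$ is simply $m_G(E)$. Taking the one-set partition $\rho = \{E\}$ in the definition of the variation,
\[
|\nu_{\Phi,0}|(E) \geq \|\nu_{\Phi,0}(E)\| \geq \|\widehat{\chi_E}(\pi_0)\|_{\mathcal{B}_2} = m_G(E).
\]

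\emph{Upper bound.} I use the $L^1$-continuity of the Fourier transform coordinatewise: for $f \in L^1(G)$ and $[\pi] \in \widehat{G}$,
\[
\|\widehat{f}(\pi)\|_{\mathcal{B}_2} = \Bigl\|\int_G f(t)\pi(t)^*\, dm_G(t)\Bigr\|_{\mathcal{B}_2} \leq \int_G |f(t)|\, \|\pi(t)^*\|_{\mathcal{B}_2}\, dm_G(t) = \|f\|_1,
\]
which rests on the normalization $\|\pi(t)^*\|_{\mathcal{B}_2} = 1$ that makes $\mathscr{F}: L^1(G) \to \ell^\infty\mbox{-}\bigoplus_{[\pi]}\mathcal{B}_2(\mathcal{H}_\pi)$ a contraction, as recalled in the Preliminaries. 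Hence $\|\widehat{\chi_E}\|_\infty \leq m_G(E)$, and for any finite Borel partition $\rho = \{E_1, \ldots, E_n\}$ of $E$,
\[
\sum_{i=1}^n \|\nu_{\Phi,0}(E_i)\| = \sum_{i=1}^n \|\widehat{\chi_{E_i}}\|_\infty \leq \sum_{i=1}^n m_G(E_i) = m_G(E).
\]
Passing to the supremum over $\rho$ gives $|\nu_{\Phi,0}|(E) \leq m_G(E)$, as required.

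There is no substantial obstacle here: the trivial representation forces the lower bound to be saturated, while the coordinatewise contraction of $\mathscr{F}$ together with the triangle inequality over partitions forces the upper bound. The only point requiring care is the norm convention on $\mathcal{B}_2(\mathcal{H}_\pi)$, which must be the one making $\|\pi(t)^*\|_{\mathcal{B}_2} = 1$; under that convention, the two bounds meet exactly at $m_G(E)$.
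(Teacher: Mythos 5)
Your proof is correct and follows essentially the same route as the paper: the upper bound $\|\nu_{\Phi,0}(E)\|\leq m_G(E)$ from the $L^1$-contractivity of the Fourier transform, and the lower bound by isolating the trivial representation $\pi_0$ (the paper phrases this as a pairing with $\chi_{\{\pi_0\}}$ in the $\ell^1$-direct sum, which is the same coordinate evaluation you perform directly). Your explicit partition argument for passing from $\|\nu_{\Phi,0}(E)\|=m_G(E)$ to $|\nu_{\Phi,0}|=m_G$, and your caveat about the normalization of $\mathcal{B}_2(\mathcal{H}_\pi)$, only make explicit what the paper leaves implicit.
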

\begin{proof}
Let $A\in\mathcal{B}(G).$ Then,
\begin{align*}
\|\nu_{\Phi,0}(A)\|_{\infty}=&\|\widehat{\chi_A}\|_{\infty}\leq \|\chi_A\|_1=m_G(A).
\end{align*}
Thus $|\nu_{\Phi,0}|$ is finite. Let $\pi_0$ denote the one dimensional representation given by $\pi_0(t)=1$ for all $t\in G.$ Let $$\chi_{\{\pi_0\}}(\pi)=\left\{\begin{array}{ccc}
1 & \mbox{if} & [\pi]=[\pi_0]\\
0_{d_\pi} & \mbox{if} & [\pi]\neq[\pi_0]
\end{array}\right.,$$ for $[\pi]\in\widehat{G}.$ Here $0_{d_\pi}$ denotes the $d_\pi\times d_\pi$ zero matrix. Then, it is clear that $\chi_{\{\pi_0\}}\in B_{\ell^1\mbox{-}\underset{[\pi]\in\widehat{G}}{\oplus}\mathcal{B}_2(\mathcal{H}_\pi)}.$ Further, \begin{align*}
\langle \nu_{\Phi,0}(A),\chi_{\{\pi_0\}} \rangle =& \langle \widehat{\chi_A},\chi_{\{\pi_0\}} \rangle \\ =& \underset{[\pi]\in\widehat{G}}{\sum} tr(\widehat{\chi_A}(\pi)\chi_{\{\pi_0\}}(\pi)) \\ =& tr(\widehat{\chi_A}(\pi_0)) = \widehat{\chi_A}(\pi_0) \\ =& \int_A\pi_0(t^{-1})\ dm_G(t) = m_G(A).
\end{align*}
Therefore, $m_G(A)=|\langle \nu_{\Phi,0}(A),\chi_{\{\pi_0\}} \rangle| \leq \|\nu_{\Phi,0}(A)\|_{\infty}.$ Hence $\|\nu_{\Phi,0}(A)\|_{\infty}=m_G(A).$ Since $A\in\mathcal{B}(G)$ is arbitrary, it follows that $|\nu_{\Phi,0}|=m_G.$
\end{proof}
As an immediate corollary we have the following.
\begin{cor}\label{SNSs_for_FT}
The measures $\nu_{\Phi,0}$ and $m_G$ have same null sets.
\end{cor}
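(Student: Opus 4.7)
The plan is to unwind the definitions and use Lemma \ref{Variation_of_nu} directly. Recall that $A\in\mathcal{B}(G)$ is $\nu_{\Phi,0}$-null by definition iff $\nu_{\Phi,0}(B)=0$ for every Borel $B\subseteq A$. I want to show this is equivalent to $m_G(A)=0$, using the already-established equality $|\nu_{\Phi,0}|=m_G$.

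First I would verify the general fact that a vector measure and its variation always share the same null sets. In one direction, if $A$ is $\nu_{\Phi,0}$-null, then for any finite Borel partition $\rho$ of $A$ every summand $\|\nu_{\Phi,0}(E)\|$ vanishes, so taking the supremum over $\rho$ gives $|\nu_{\Phi,0}|(A)=0$. In the other direction, if $|\nu_{\Phi,0}|(A)=0$, then for every Borel $B\subseteq A$ the one-element partition $\{B\}$ of $B$ forces $\|\nu_{\Phi,0}(B)\|_\infty\leq|\nu_{\Phi,0}|(B)\leq|\nu_{\Phi,0}|(A)=0$, hence $\nu_{\Phi,0}(B)=0$, i.e.\ $A$ is $\nu_{\Phi,0}$-null.

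Combining this equivalence with Lemma \ref{Variation_of_nu} yields: $A$ is $\nu_{\Phi,0}$-null iff $|\nu_{\Phi,0}|(A)=0$ iff $m_G(A)=0$, which is exactly the claim.

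There is no real obstacle here; the statement is essentially a packaging of Lemma \ref{Variation_of_nu} together with the standard monotonicity $\|\nu(B)\|\leq|\nu|(B)$ for vector measures. The only point to keep in mind is to argue at the level of subsets of $A$ (to match the definition of $\nu$-null), rather than at the single value $\nu_{\Phi,0}(A)$.
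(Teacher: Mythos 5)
Your argument is correct and is exactly the content the paper intends when it calls this an ``immediate corollary'' of Lemma \ref{Variation_of_nu}: the paper gives no written proof, and your verification that a vector measure and its variation share null sets (via the one-element partition for one direction and vanishing partition sums for the other) is the standard filling-in of that step.
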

\begin{rem}
By Corollary \ref{SNSs_for_FT} and \cite[Pg. 194, Theorem 4.14]{ORP}, the space $L^1(\nu_{\Phi,0})$ is the largest Banach function space with $\sigma$-order continuous norm such that $L^\Phi(G)$ is continuously embedded into $L^1(\nu_{\Phi,0})$ and the operator $\mathscr{F}_{\Phi,0}$ has a continuous extension with values in $c_0$-$\underset{[\pi]\in\widehat{G}}{\oplus}\mathcal{B}_2(\mathcal{H}_\pi).$ Further, this extension is unique and is the integration operator $I_{\nu_{\Phi,0}}.$
\end{rem}
\begin{cor}\label{Coincide}
The spaces $L^1(\nu_{\Phi,0})$ and $L^1(G)$ coincide. Further, the extended operator $I_{\nu_{\Phi,0}}$ coincides with the Fourier transform on $L^1(G).$
\end{cor}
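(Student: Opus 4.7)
I will show that $L^1(\nu_{\Phi,0})$ and $L^1(G)$ coincide as Banach spaces (up to equivalence of norms) by establishing both inclusions, and then identify the two operators by density of simple functions.

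For $L^1(\nu_{\Phi,0})\subseteq L^1(G)$, the key input is the functional $\chi_{\{\pi_0\}}$ already built in the proof of Lemma \ref{Variation_of_nu}. Since the dual of $c_0\mbox{-}\underset{[\pi]\in\widehat{G}}{\oplus}\mathcal{B}_2(\mathcal{H}_\pi)$ is $\ell^1\mbox{-}\underset{[\pi]\in\widehat{G}}{\oplus}\mathcal{B}_2(\mathcal{H}_\pi)$ under the trace pairing, $\chi_{\{\pi_0\}}$ lies in the closed unit ball of the dual. The computation in that lemma gives $\langle\nu_{\Phi,0},\chi_{\{\pi_0\}}\rangle(A)=m_G(A)$ for every Borel $A$, so $|\langle\nu_{\Phi,0},\chi_{\{\pi_0\}}\rangle|=m_G$, and for every $f\in L^1(\nu_{\Phi,0})$,
\[
\|f\|_1 \;=\; \int_G|f|\,d|\langle\nu_{\Phi,0},\chi_{\{\pi_0\}}\rangle| \;\leq\; \|f\|_{\nu_{\Phi,0}}.
\]
This embeds $L^1(\nu_{\Phi,0})$ continuously into $L^1(G)$.

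For the reverse inclusion $L^1(G)\subseteq L^1(\nu_{\Phi,0})$, I would first handle simple functions, which are always $\nu_{\Phi,0}$-integrable. Combining the general bound $|\langle\nu_{\Phi,0},x^\prime\rangle|(A)\leq\|x^\prime\||\nu_{\Phi,0}|(A)$ with Lemma \ref{Variation_of_nu} yields $\|s\|_{\nu_{\Phi,0}}\leq\|s\|_1$ for any simple $s$. For general $f\in L^1(G)$, approximate by simple functions $(s_n)$ that are Cauchy in $L^1(G)$; the norm estimate shows $(s_n)$ is also Cauchy in $L^1(\nu_{\Phi,0})$, and completeness provides a limit $g\in L^1(\nu_{\Phi,0})$. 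Feeding $g$ through the first inclusion shows $g$ is the $L^1(G)$-limit of $(s_n)$ as well, so $g=f$ almost everywhere; Corollary \ref{SNSs_for_FT} guarantees that these equivalence classes agree in both spaces.

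Finally, $I_{\nu_{\Phi,0}}$ and the $L^1$-Fourier transform are both continuous linear maps from $L^1(G)=L^1(\nu_{\Phi,0})$ into $c_0\mbox{-}\underset{[\pi]\in\widehat{G}}{\oplus}\mathcal{B}_2(\mathcal{H}_\pi)$, and by the definition of $\nu_{\Phi,0}$ they agree on every $\chi_A$. Linearity extends this to simple functions, and density of simple functions together with continuity extends it to all of $L^1(G)$. The only subtle point in the plan is the identification of limits in the reverse inclusion, which is dispatched by using the first inclusion together with the coincidence of null sets from Corollary \ref{SNSs_for_FT}.
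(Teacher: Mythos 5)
Your proposal is correct and follows essentially the same route as the paper: both arguments sandwich $L^1(\nu_{\Phi,0})$ between $L^1(|\nu_{\Phi,0}|)=L^1(G)$ (from Lemma \ref{Variation_of_nu}) and $L^1(|\langle\nu_{\Phi,0},\chi_{\{\pi_0\}}\rangle|)=L^1(G)$, and then identify the operators by density. The only difference is cosmetic: you prove the inclusion $L^1(|\nu_{\Phi,0}|)\subseteq L^1(\nu_{\Phi,0})$ by hand via simple functions and completeness, where the paper cites \cite[Pg.~116, Lemma 3.14]{ORP}, and you use simple functions rather than continuous functions in the final density step.
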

\begin{proof}
By Lemma \ref{Variation_of_nu}, it follows that the spaces $L^1(G)$ and $L^1(|\nu_{\Phi,0}|)$ coincide. Further, the inclusions $$L^1(|\nu_{\Phi,0}|)\subseteq L^1(\nu_{\Phi,0})\subseteq L^1(|\langle \nu_{\Phi,0},\chi_{\{\pi_0\}} \rangle|)$$ are clear, where the first inclusion follows from \cite[Pg. 116, Lemma 3.14]{ORP}. Here $\pi_0$ and $\chi_{\{\pi_0\}}$ are defined as in proof of Lemma \ref{Variation_of_nu}. Also, it follows from the proof of Lemma \ref{Variation_of_nu} that $L^1(|\langle \nu_{\Phi,0},\chi_{\{\pi_0\}} \rangle|)$ coincides with $L^1(G).$ Thus the spaces $L^1(G)$ and $L^1(\nu_{\Phi,0})$ coincide. Hence, by the density of the space continuous functions on $G$ in both $L^1(G)$ and $L^\Phi(G),$ it follows that the extended operator is just the Fourier transform on $L^1(G).$
\end{proof}
\begin{cor}
Let $(\Phi,\Psi)$ be a pair of complementary Young functions satisfying the $\Delta_2$-condition. If $G$ is an infinite compact group then the domain of the optimal extension of $\mathscr{F}_{\Phi,0}$ contains $L^\Phi(G)$ as a proper subspace.
\end{cor}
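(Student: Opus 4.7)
The plan is to reduce the statement, via Corollary \ref{Coincide}, to showing that $L^\Phi(G)$ is a strict subspace of $L^1(G)$ whenever $G$ is infinite, and then to derive this strictness from a reflexivity obstruction.

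First, by Corollary \ref{Coincide} the optimal extension domain $L^1(\nu_{\Phi,0})$ equals $L^1(G)$. Since $m_G$ is a probability measure and $\Phi$ is a Young function with $\Phi(x)/x\to\infty$, the continuous inclusion $L^\Phi(G)\subseteq L^1(G)$ is automatic. So the task reduces to showing that this inclusion is not onto when $G$ is infinite.

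Second, since both $\Phi$ and $\Psi$ satisfy the $\Delta_2$-condition, the result \cite[Pg.\ 112, Theorem 10]{RR} recalled in Section 2 tells us that $L^\Phi(G)$ is reflexive. On the other hand, I would argue that $L^1(G)$ is \emph{not} reflexive when $G$ is infinite. The key point is that Haar measure on an infinite compact group is non-atomic: by translation invariance every singleton has the same Haar measure, and since $G$ has infinitely many points but total mass $1$, each singleton must have measure $0$. On a non-atomic probability space, $L^1$ is classically known to contain a complemented isomorphic copy of $\ell^1$ (built from any sequence of pairwise disjoint Borel sets of prescribed positive measures) and is therefore not reflexive.

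Combining the two facts, $L^\Phi(G)$ and $L^1(G)$ cannot be isomorphic as Banach spaces; in particular, the continuous inclusion $L^\Phi(G)\hookrightarrow L^1(G)$ cannot be surjective, so it is proper, which gives the claim. The only mildly non-routine ingredient is the non-reflexivity of $L^1(G)$, and I would dispatch it with a citation to a standard Banach space reference rather than constructing the $\ell^1$-copy explicitly. An alternative route, avoiding reflexivity, would use that $\Psi\in\Delta_2$ forces $\Phi\in\nabla_2$ and hence $\Phi(x)/x\to\infty$, allowing an explicit construction of a function in $L^1(G)\setminus L^\Phi(G)$ supported on a sequence of disjoint Borel sets of geometrically decreasing Haar measure; this is feasible since $m_G$ is non-atomic, but the reflexivity argument is noticeably shorter.
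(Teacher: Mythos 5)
Your proposal is correct and follows essentially the same route as the paper: reduce via Corollary \ref{Coincide} to comparing $L^\Phi(G)$ with $L^1(G)$, and then use that $L^\Phi(G)$ is reflexive (since both $\Phi$ and $\Psi$ satisfy $\Delta_2$) while $L^1(G)$ is not reflexive for infinite $G$. The paper's proof is just a one-line appeal to reflexivity plus Corollary \ref{Coincide}; you have merely made explicit the details it leaves implicit (non-atomicity of Haar measure, the $\ell^1$-copy in $L^1$, and the open mapping theorem step).
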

\begin{proof}
Since $\Phi$ and $\Psi$ both satisfy the $\Delta_2$-condition, the space $L^\Phi(G)$ is reflexive and hence the proof follows from Corollary \ref{Coincide}.
\end{proof}

\subsection{Optimal extension of the Hausdorff-Young inequality}
Let $(\Phi,\Psi)$ denote a pair of complementary Young functions which are normalized. Further, we shall denote by $\ell^\Phi\mbox{-}\underset{[\pi]\in\widehat{G}}{\oplus}\mathcal{B}_2(\mathcal{H}_\pi),$ the Banach space consisting of $(x_{[\pi]})_{[\pi]\in\widehat{G}}\in\underset{[\pi]\in\widehat{G}}{\Pi}\mathcal{B}_2(\mathcal{H}_\pi)$ for which  $$\|(x_{[\pi]})\|_{\Phi}:=\inf\left\{ k>0|\underset{[\pi]\in\widehat{G}}{\sum} d_\pi^2\Phi\left( \frac{\|x_{[\pi]}\|_{\mathcal{B}_2(\mathcal{H}_\pi)}}{k\sqrt{d_{\pi}}} \right)\leq\Phi(1)\right\}<\infty.$$ The following is the Hausdorff-Young inequality for Orlicz spaces. See \cite[Pg. 211, Theorem 2]{RR2}. Also, see \cite{RR2} for any undefined notations regarding this theorem.
\begin{thm}
Let $(\Phi,\Psi)$ be a complementary pair of Young functions which are continuous and normalized. Suppose that
\begin{enumerate}[(i)]
\item $\Phi\prec'\Phi_0,$ where $\Phi_0(x)=c_1x^2$ and
\item $\Psi'(x)\leq c_2 x^r,$
\end{enumerate}
for some $r\geq 1,$ $c_1,c_2>0.$ Then the Fourier transform is a bounded linear operator from $L^\Phi(G)$ into $\ell^\Psi\mbox{-}\underset{[\pi]\in\widehat{G}}{\oplus}\mathcal{B}_2(\mathcal{H}_\pi).$
\end{thm}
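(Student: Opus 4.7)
The plan is to derive this Orlicz-space Hausdorff--Young inequality by interpolating the Fourier transform between its two classical endpoints. On one side, the trivial bound $\|\widehat{f}(\pi)\|_{\mathcal{B}_2(\mathcal{H}_\pi)} \leq \|f\|_1$, uniform in $[\pi]\in\widehat{G}$, says that $f\mapsto\widehat{f}$ is a contraction from $L^1(G)$ into $\ell^\infty$-$\underset{[\pi]\in\widehat{G}}{\oplus}\mathcal{B}_2(\mathcal{H}_\pi)$. On the other side, Plancherel's theorem for compact groups reads
\[
\sum_{[\pi]\in\widehat{G}} d_\pi \,\|\widehat{f}(\pi)\|_{\mathcal{B}_2(\mathcal{H}_\pi)}^2 \,=\, \|f\|_2^2,
\]
so $f\mapsto\widehat{f}$ is simultaneously an isometry from $L^2(G)$ onto the appropriately $d_\pi$-weighted $\ell^2$-direct sum. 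The strategy is to combine these two endpoint estimates by means of an interpolation theorem adapted to Orlicz spaces.

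Concretely, one would invoke a Marcinkiewicz/Calder\'on-type interpolation result for Orlicz spaces, in the spirit of O'Neil or of the version in \cite{RR2}. The hypothesis $\Phi\prec'\Phi_0$ with $\Phi_0(x)=c_1 x^2$ is precisely a Matuszewska--Orlicz index condition forcing the growth of $\Phi$ to lie strictly between those of the $L^1$- and $L^2$-cases, while the companion condition $\Psi'(x)\leq c_2 x^r$ provides polynomial-type control on the complementary Young function that tames the series appearing in the Luxemburg norm on $\ell^\Psi$-$\underset{[\pi]\in\widehat{G}}{\oplus}\mathcal{B}_2(\mathcal{H}_\pi)$. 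With these hypotheses in hand, one would first verify on simple functions $f$ that
\[
\sum_{[\pi]\in\widehat{G}} d_\pi^2 \,\Psi\!\left(\frac{\|\widehat{f}(\pi)\|_{\mathcal{B}_2(\mathcal{H}_\pi)}}{k\sqrt{d_\pi}}\right) \leq \Phi(1)
\]
for a constant $k$ proportional to $\|f\|_\Phi$, using the interpolation estimate, and then extend by the density of simple functions in $L^\Phi(G)$, which is available thanks to the $\Delta_2$-condition and the $\sigma$-order continuity of $L^\Phi(G)$.

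The main obstacle I anticipate is the weight-bearing, operator-valued nature of the target sequence space. Unlike the scalar $\ell^p$ setting, each coordinate here lies in $\mathcal{B}_2(\mathcal{H}_\pi)$, and the norm on the direct sum carries both the factor $d_\pi^2$ outside $\Psi$ and the normalization $\sqrt{d_\pi}$ inside. Reconciling this weighted Orlicz structure with the intermediate norm produced by the abstract interpolation functor is the delicate step, and it is precisely why the normalization of $(\Phi,\Psi)$ and the conditions (i), (ii) are imposed. Once the weighting has been matched, the passage from simple functions to a general $f\in L^\Phi(G)$ is routine.
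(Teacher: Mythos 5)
The first thing to say is that the paper does not prove this theorem at all: it is quoted verbatim from Rao--Ren \cite[Pg.~211, Theorem~2]{RR2}, so there is no in-paper argument to measure you against. Your proposed route --- interpolating the Fourier transform between the $L^1\to\ell^\infty$ bound and the Plancherel identity on $L^2$ by means of an Orlicz-space interpolation theorem --- is indeed the standard way such Hausdorff--Young statements are obtained, and is in the spirit of how the cited source proceeds. So the strategy is sound.

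As a proof, however, what you have written has two genuine gaps. First, your $L^1$ endpoint is stated incorrectly for nonabelian $G$: the uniform bound $\|\widehat{f}(\pi)\|_{\mathcal{B}_2(\mathcal{H}_\pi)}\leq\|f\|_1$ fails when the dimensions $d_\pi$ are unbounded, since the Hilbert--Schmidt norm only satisfies $\|\widehat{f}(\pi)\|_{\mathcal{B}_2(\mathcal{H}_\pi)}\leq\sqrt{d_\pi}\,\|f\|_1$ (the dimension-free bound holds for the operator norm, equivalently for $d_\pi^{-1/2}\|\widehat{f}(\pi)\|_{\mathcal{B}_2(\mathcal{H}_\pi)}$). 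This is exactly why the target norm carries the normalization $\sqrt{d_\pi}$ inside $\Psi$ and the weight $d_\pi^2$ outside, and it means the endpoint pair you want to interpolate must be set up in the weighted/normalized coordinates from the start, not patched in afterwards. Second, the decisive step --- which Orlicz interpolation theorem applies, and how hypotheses (i) and (ii) enter it quantitatively --- is left as a black box: you describe $\Phi\prec'\Phi_0$ impressionistically as an index condition, whereas in Rao--Ren $\prec'$ is a specific submultiplicativity-type relation ($\Phi(xy)\leq\Phi_0(x)\Phi_0(y)$ for large arguments), and it is precisely this relation, together with the polynomial bound on $\Psi'$, that lets one sum the series $\sum_\pi d_\pi^2\Psi\bigl(\|\widehat{f}(\pi)\|_{\mathcal{B}_2(\mathcal{H}_\pi)}/(k\sqrt{d_\pi})\bigr)$ after splitting $f$ at a level set. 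You correctly identify the reconciliation of the weights with the interpolation functor as ``the delicate step,'' but you do not carry it out; until that step is executed, this is an outline rather than a proof.
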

Throughout this subsection, the complementary Young pair $(\Phi,\Psi)$ will satisfy the conditions of the above theorem. We shall denote by $\mathscr{F}_{\Phi,\Psi}$ the operator from $L^\Phi(G)$ to $\ell^\Psi\mbox{-}\underset{[\pi]\in\widehat{G}}{\oplus}\mathcal{B}_2(\mathcal{H}_\pi)$ given by the above theorem. Further, we shall also denote by $\nu_{\Phi,\Psi}$ the associated vector measure.
\begin{lem}\label{SNSs_for_HYI}
The measures $\nu_{\Phi,\Psi}$ and $m_G$ have same null sets.
\end{lem}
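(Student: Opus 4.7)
The plan is to establish the two inclusions of null sets directly from the definition of $\nu_{\Phi,\Psi}$, using the fact that $\nu_{\Phi,\Psi}(A) = \mathscr{F}_{\Phi,\Psi}(\chi_A) = \widehat{\chi_A}$ computed as the Fourier transform in $L^1(G)$ (since $L^\Phi(G) \subseteq L^1(G)$).

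For the easy direction, I would assume $m_G(A) = 0$ and note that then every Borel subset $B \subseteq A$ also has $m_G(B) = 0$, so $\chi_B$ represents the zero element of $L^\Phi(G)$. Consequently $\nu_{\Phi,\Psi}(B) = \mathscr{F}_{\Phi,\Psi}(\chi_B) = 0$ for every such $B$, which is exactly the statement that $A$ is $\nu_{\Phi,\Psi}$-null.

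For the converse, I would suppose $A$ is $\nu_{\Phi,\Psi}$-null and apply the definition with $B = A$ itself. This gives $\widehat{\chi_A} = \nu_{\Phi,\Psi}(A) = 0$ in $\ell^\Psi\mbox{-}\underset{[\pi]\in\widehat{G}}{\oplus}\mathcal{B}_2(\mathcal{H}_\pi)$, i.e., $\widehat{\chi_A}(\pi) = 0$ for every $[\pi] \in \widehat{G}$. Since $\chi_A \in L^1(G)$ and the Fourier transform on $L^1(G)$ is injective for a compact group (a standard consequence of the Peter--Weyl theorem, guaranteeing that matrix coefficients separate points of $L^1(G)$), we conclude $\chi_A = 0$ in $L^1(G)$, hence $m_G(A) = 0$.

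The argument is essentially routine; the only substantive input is the injectivity of the $L^1$-Fourier transform on compact groups, which is what makes the $\nu_{\Phi,\Psi}$-nullity of $A$ propagate to Haar-nullity. There is no real obstacle here: the proof parallels the observation used in Corollary~\ref{SNSs_for_FT}, except that here the variation of $\nu_{\Phi,\Psi}$ is not being identified — only its null sets are — so the argument does not need a companion of Lemma~\ref{Variation_of_nu}.
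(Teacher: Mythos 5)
Your proposal is correct and follows essentially the same route as the paper, which simply observes that $m_G(A)=0$ if and only if $\widehat{\chi_A}=0$ if and only if $\nu_{\Phi,\Psi}(A)=0$, the key input being the injectivity of the $L^1$-Fourier transform. Your version is in fact slightly more careful in unwinding the definition of a $\nu$-null set (checking all Borel subsets $B\subseteq A$), but the substance is identical.
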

\begin{proof}
Let $A\in\mathcal{B}(G).$ Then $m_G(A)=0$ if and only if $\widehat{\chi_A}=0$ if and only if $\nu_{\Phi,\Psi}(A)=0.$ Hence the proof.
\end{proof}
\begin{rem}
By Lemma \ref{SNSs_for_HYI} and by \cite[Pg. 194, Theorem 4.14]{ORP}, the space $L^1(\nu_{\Phi,\Psi})$ is the largest Banach function space with $\sigma$-order continuous norm such that $L^\Phi(G)$ is continuously embedded into $L^1(\nu_{\Phi,\Psi})$ and the operator $\mathscr{F}_{\Phi,\Psi}$ has a continuous extension with values in $\ell^\Psi$-$\underset{[\pi]\in\widehat{G}}{\oplus}\mathcal{B}_2(\mathcal{H}_\pi).$ Further, this extension is unique and is the integration operator $I_{\nu_{\Phi,\Psi}}.$
\end{rem}
\begin{thm}
Let $(\Phi,\Psi)$ be a complementary pair of Young functions satisfying the $\Delta_2$-condition.
\begin{enumerate}[{\bf (i)}]
\item The space $L^\Phi(G)$ is a dense subspace of $L^1(\nu_{\Phi,\Psi})$ and $\|f\|_{\nu_{\Phi,\Psi}}\leq\|\mathscr{F}_{\Phi,\Psi}\|\|f\|_\Phi,$ $f\in L^\Phi(G).$
\item The space $L^1(\nu_{\Phi,\Psi})$ is a dense subspace of $L^1(G)$ and $\|f\|_1\leq\|f\|_{\nu_{\Phi,\Psi}},$ $f\in L^1({\nu_{\Phi,\Psi}}).$
\item The extended operator $I_{\nu_{\Phi,\Psi}}:L^1({\nu_{\Phi,\Psi}})\rightarrow\ell^\Psi$-$\underset{[\pi]\in\widehat{G}}{\oplus}\mathcal{B}_2(\mathcal{H}_\pi)$ is just the Fourier transform map, i.e., $I_{\nu_{\Phi,\Psi}}(f)=\widehat{f},$ for all $f\in L^1(\nu_{\Phi,\Psi}).$
\end{enumerate}
\end{thm}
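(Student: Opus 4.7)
Item (i) follows from the standard norm-control in the embedding theorem \cite[Thm 4.14]{ORP} invoked in the preceding remark: for every simple function $h$ with $|h|\le|f|$, $\int h\,d\nu_{\Phi,\Psi}=\mathscr{F}_{\Phi,\Psi}(h)$, so
\[
\|f\|_{\nu_{\Phi,\Psi}}=\sup\bigl\{\|\mathscr{F}_{\Phi,\Psi}(h)\|:h\text{ simple},\ |h|\le|f|\bigr\}\le\|\mathscr{F}_{\Phi,\Psi}\|\,\|f\|_\Phi,
\]
using the identification $\|f\|_\nu=\sup_{|h|\le|f|}\|\int h\,d\nu\|$. Density of $L^\Phi(G)$ in $L^1(\nu_{\Phi,\Psi})$ is immediate: simple functions are norm-dense in $L^1(\nu_{\Phi,\Psi})$ and each $\chi_A$ lies in $L^\Phi(G)$ since $m_G(G)<\infty$.

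For item (ii), the key device is again the trivial representation $\pi_0$ and the vector $\chi_{\{\pi_0\}}$ introduced in Lemma \ref{Variation_of_nu}, now regarded as an element of the dual of $\ell^\Psi\mbox{-}\underset{[\pi]\in\widehat{G}}{\oplus}\mathcal{B}_2(\mathcal{H}_\pi)$ via $\bigl\langle(x_{[\pi]}),\chi_{\{\pi_0\}}\bigr\rangle=x_{[\pi_0]}$. Inspecting the definition of $\|\cdot\|_\Psi$ shows that if $\|(x_{[\pi]})\|_\Psi=k$, then the single summand at $[\pi_0]$ already forces $\Psi(|x_{[\pi_0]}|/k)\le\Psi(1)$ and hence $|x_{[\pi_0]}|\le k$; so $\chi_{\{\pi_0\}}$ lies in the unit ball of the dual. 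Repeating verbatim the computation at the end of Lemma \ref{Variation_of_nu} then gives $\langle\nu_{\Phi,\Psi},\chi_{\{\pi_0\}}\rangle=m_G$ as scalar measures, and consequently for $f\in L^1(\nu_{\Phi,\Psi})$,
\[
\|f\|_1=\int_G|f|\,dm_G=\int_G|f|\,d\bigl|\langle\nu_{\Phi,\Psi},\chi_{\{\pi_0\}}\rangle\bigr|\le\|f\|_{\nu_{\Phi,\Psi}}.
\]
This produces both the inclusion $L^1(\nu_{\Phi,\Psi})\subseteq L^1(G)$ and the claimed norm inequality. Density of $L^1(\nu_{\Phi,\Psi})$ in $L^1(G)$ now follows from the chain $L^\Phi(G)\subseteq L^1(\nu_{\Phi,\Psi})\subseteq L^1(G)$ together with the standard density of $L^\Phi(G)$ in $L^1(G)$.

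For (iii), I argue by density. On characteristic functions, $I_{\nu_{\Phi,\Psi}}(\chi_A)=\nu_{\Phi,\Psi}(A)=\widehat{\chi_A}$, whence $I_{\nu_{\Phi,\Psi}}(s)=\widehat{s}$ on simple $s$ by linearity. Given $f\in L^1(\nu_{\Phi,\Psi})$, choose simple $s_n\to f$ in $L^1(\nu_{\Phi,\Psi})$; by (ii) also $s_n\to f$ in $L^1(G)$, and therefore $I_{\nu_{\Phi,\Psi}}(s_n)\to I_{\nu_{\Phi,\Psi}}(f)$ in $\ell^\Psi\mbox{-}\underset{[\pi]\in\widehat{G}}{\oplus}\mathcal{B}_2(\mathcal{H}_\pi)$ while $\widehat{s_n}\to\widehat{f}$ in $\ell^\infty\mbox{-}\underset{[\pi]\in\widehat{G}}{\oplus}\mathcal{B}_2(\mathcal{H}_\pi)$. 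Since the component-at-$[\pi]$ evaluation is continuous on both direct sums and $I_{\nu_{\Phi,\Psi}}(s_n)=\widehat{s_n}$, the limits coincide componentwise, giving $I_{\nu_{\Phi,\Psi}}(f)=\widehat{f}$.

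The main obstacle I anticipate is precisely the bound $\|\chi_{\{\pi_0\}}\|\le 1$ in the dual of the Orlicz-type direct sum used in (ii): the rest of the proof essentially copies the pattern of Corollary \ref{Coincide}, but this bound is where the normalization $\Phi(1)+\Psi(1)=1$ fixed at the start of the subsection actually enters and has to be verified by a direct unpacking of the Luxemburg-type norm on the direct sum.
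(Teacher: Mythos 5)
Your proof is correct and follows essentially the same route as the paper: part (i) via the optimal-domain embedding bound (you re-derive the norm inequality from the lattice-sup description of $\|\cdot\|_{\nu}$ where the paper simply cites \cite[Pg.\ 185, Theorem 4.4]{ORP}), part (ii) via the functional induced by $\chi_{\{\pi_0\}}$ recovering $m_G$ exactly as in Lemma \ref{Variation_of_nu}, and part (iii) by density of simple functions. Your direct unpacking of the Luxemburg-type norm to show $\chi_{\{\pi_0\}}$ lies in the dual unit ball is a welcome explicit verification of a step the paper only asserts.
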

\begin{proof}
{\bf (i)} Since $L^1(\nu_{\Phi,\Psi})$ is the optimal domain, it follows that $L^\Phi(G)$ is contained in $L^1(\nu_{\Phi,\Psi}).$ Further, by \cite[Pg. 185, Theorem 4.4]{ORP}, it follows that the inclusion of $L^\Phi(G)$ inside $L^1(\nu_{\Phi,\Psi})$ is continuous with its operator norm equal to $\|\mathscr{F}_{\Phi,\Psi}\|.$ The conclusion follows from the density of simple functions in both the spaces.

{\bf (ii)} Consider the element $\chi_{\{\pi_0\}}$ from $\ell^\Phi$-$\underset{[\pi]\in\widehat{G}}{\oplus}\mathcal{B}_2(\mathcal{H}_\pi).$ Here $\pi_0$ and $\chi_{\{\pi_0\}}$ are defined as in the proof of Lemma \ref{Variation_of_nu}. Here, note that $\chi_{\{\pi_0\}}\in B_{\ell^\Phi\mbox{-}\underset{[\pi]\in\widehat{G}}{\oplus}\mathcal{B}_2(\mathcal{H}_\pi)}.$ Again, as in the proof of Lemma \ref{Variation_of_nu}, one can show that $m_G=|\langle \nu_{\Phi,\Psi},\chi_{\{\pi_0\}} \rangle|.$ Thus, if $f\in L^1(\nu_{\Phi,\Psi}),$ then 
\begin{align*}
\int_G |f| dm_G =& \int_G |f| d|\langle \nu_{\Phi,\Psi},\chi_{\{\pi_0\}} \rangle|\leq \|f\|_{\nu_{\Phi,\Psi}}.
\end{align*}
Thus $f\in L^1(G).$ Further, the density of $L^1(\nu_{\Phi,\Psi})$ in $L^1(G),$ again, follows from the density of the simple functions.

{\bf (iii)} This is again a consequence of the density of simple functions and (ii).
\end{proof}

\section{Optimal extension of the convolution operator}

In this section, we consider the convolution operator and determine its optimal domain. Further, we also characterize when the extended operator will be compact. The results of this section are analogues of the results obtained in \cite{OR2}. Throughout this section, $(\Phi,\Psi)$ will denote a pair of complementary Young functions satisfying the $\Delta_2$-condition.

Let $\mu\in M(G).$ Define $T_{\Phi,\mu}:L^\Phi(G)\rightarrow L^\Phi(G)$ as $T_{\Phi,\mu}(f)=\mu*f,$ where $$\mu*f(t)=\int_G f(s^{-1}t)d\mu(s).$$ Then $T_{\Phi,\mu}$ is a bounded linear operator and $\|T_{\Phi,\mu}\|\leq \|\mu\|.$ Let $\nu_{\Phi,\mu}$ denote the associated vector measure. If $d\mu=gdm_G,$ then $\nu_{\Phi,g}$ will denote the measure $\nu_{\Phi,\mu}.$

\subsection{Optimal extension}
\begin{prop}\label{BI}
Let $g\in L^\Phi(G).$ Let $F_{\Phi,g}(t)=\tau_tg,\ t\in G,$ where $\tau_tg(s)=g(st).$
\begin{enumerate}[{\bf (i)}]
\item If $f\in L^1(G),$ then the function $t\mapsto fF_{\Phi,g}(t)$ from $G$ to $L^\Phi(G)$ is integrable (in the sense of Bochner).
\item If $f\in L^\Phi(G),$ then $g*f=\int_G fF_{\Phi,g} \ dm_G.$
\end{enumerate}
\end{prop}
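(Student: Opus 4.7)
My plan is to establish the two parts in sequence, using standard properties of Bochner integration together with the duality $L^\Phi(G)^{\ast}=L^\Psi(G)$.

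For part \textbf{(i)}, Bochner integrability will follow from strong measurability and norm integrability. Under the $\Delta_2$-condition the map $t\mapsto\tau_t g$ is continuous from $G$ into $L^\Phi(G)$: this is proved in the standard way, approximating $g$ by continuous functions (dense because $\Phi$ satisfies $\Delta_2$) and invoking uniform continuity of continuous functions on the compact group $G$. The image is therefore compact, hence separable, so $t\mapsto\tau_t g$ is strongly measurable by Pettis' theorem; multiplication by the scalar-valued measurable function $f$ preserves strong measurability. Translation invariance of $m_G$ yields $\|\tau_t g\|_\Phi=\|g\|_\Phi$, so
\[
\int_G \|f(t)\tau_t g\|_\Phi\,dm_G(t)=\|g\|_\Phi\int_G|f(t)|\,dm_G(t)=\|f\|_1\|g\|_\Phi<\infty,
\]
giving Bochner integrability.

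For part \textbf{(ii)}, since $L^\Phi(G)\subseteq L^1(G)$, part (i) ensures that $\int_G f F_{\Phi,g}\,dm_G$ is a well-defined element of $L^\Phi(G)$, and $g\ast f\in L^\Phi(G)$ because $T_{\Phi,\mu}$ with $d\mu=g\,dm_G$ is bounded. I will verify equality by pairing with an arbitrary $h\in L^\Psi(G)=L^\Phi(G)^{\ast}$. Using the defining property of the Bochner integral, followed by Fubini and the left-invariant substitution $s=ut$ in the inner integral,
\begin{align*}
\Bigl\langle \int_G f F_{\Phi,g}\,dm_G, h \Bigr\rangle
  &= \int_G f(t)\int_G g(ut)h(u)\,dm_G(u)\,dm_G(t) \\
  &= \int_G h(u)\int_G f(u^{-1}s)g(s)\,dm_G(s)\,dm_G(u) \\
  &= \int_G h(u)(g\ast f)(u)\,dm_G(u) = \langle g\ast f, h\rangle.
\end{align*}
Since $h$ was arbitrary and the duality separates points, the identity in (ii) follows.

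The main technical step to justify is Fubini's theorem, which requires absolute integrability of $(t,u)\mapsto f(t)g(ut)h(u)$ on $G\times G$. By H\"older's inequality for Orlicz spaces and translation invariance of both the Luxemburg and Orlicz norms, the inner integral in $u$ is bounded by $|f(t)|\|g\|_\Phi\|h\|_\Psi^o$, whose outer integral in $t$ equals $\|f\|_1\|g\|_\Phi\|h\|_\Psi^o<\infty$; thus Fubini applies. The other mildly subtle point is continuity of translation on $L^\Phi(G)$, which is precisely where the $\Delta_2$-condition is used in an essential way.
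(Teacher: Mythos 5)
Your overall strategy is the same as the paper's: for (i), strong measurability via compactness of the range of $t\mapsto\tau_tg$ plus the Pettis measurability theorem, followed by norm-integrability from the translation invariance of $\|\cdot\|_\Phi$; for (ii), pairing against $h\in L^\Psi(G)=L^\Phi(G)'$ and applying Fubini. You supply details the paper omits (the continuity of $t\mapsto\tau_tg$ via density of $C(G)$ under the $\Delta_2$-condition, and the Tonelli estimate justifying Fubini), and you correctly separate the continuous map $t\mapsto\tau_tg$ from the merely measurable factor $f$, whereas the paper loosely speaks of ``the continuous mapping $t\mapsto fF_{\Phi,g}(t)$.''

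There is, however, one step in (ii) that does not close as written. Your computation faithfully yields
\[
\Bigl\langle \int_G fF_{\Phi,g}\,dm_G,\,h\Bigr\rangle=\int_G h(u)\int_G f(u^{-1}s)\,g(s)\,dm_G(s)\,dm_G(u),
\]
but with the paper's definition $\mu*f(t)=\int_Gf(s^{-1}t)\,d\mu(s)$ one has $(g*f)(u)=\int_G f(s^{-1}u)\,g(s)\,dm_G(s)$, and $f(u^{-1}s)\neq f(s^{-1}u)$ for nonabelian $G$; so the final identification of the inner integral with $(g*f)(u)$ is not justified. The root cause is not your argument but the definition $\tau_tg(s)=g(st)$ in the statement: taking $f=\chi_A$ shows $\int_A\tau_tg\,dm_G$ evaluated at $x$ equals $\int_{xA}g\,dm_G$ while $(g*\chi_A)(x)=\int_{xA^{-1}}g\,dm_G$, so the proposition as literally stated fails unless $\tau_t$ is read as $\tau_tg(s)=g(st^{-1})$. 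With that (evidently intended) definition your proof goes through verbatim: the inner integral becomes $\int_Gf(t)g(ut^{-1})\,dm_G(t)$, and the substitution $s=ut^{-1}$ (legitimate since compact groups are unimodular and inversion preserves $m_G$) produces exactly $\int_Gf(s^{-1}u)g(s)\,dm_G(s)=(g*f)(u)$. You should state this correction explicitly rather than letting the mismatch pass silently.
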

\begin{proof}
{\bf (i)} Since $G$ is compact, it follows that the range of the continuous mapping $t\mapsto fF_{\Phi,g}(t)$ is compact and hence separable. Thus, by Pettis measurability theorem \cite[Pg. 42, Theorem 2]{DU}, it follows that the function $fF_{\Phi,g}$ is strongly measurable. By the fact that $L^\Phi(G)$ is translation invariant and that the translations are norm preserving, it follows that $\int_G\|fF_{\Phi,g}\|_{\Phi}\ dm_G<\infty,$ thus proving (i).

{\bf (ii)} By (i), Fubini's theorem and the fact that the dual of $L^\Phi(G)$ is $L^\Psi(G),$ we get that 
\begin{align*}
\langle g*f,h \rangle =&\left\langle \int_G fF_{\Phi,g} \ dm_G,h \right\rangle,\ \forall\ h\in L^\Psi(G). \qedhere
\end{align*}
\end{proof}
\begin{cor}\label{IRVMCOF}
Let $g\in L^\Phi(G).$ Then the $L^\Phi(G)$-valued measure $\nu_{\Phi,g}$ can be written as $$\nu_{\Phi,g}(A)=\int_A F_{\Phi,g}\ dm_G,$$ where $F_{\Phi,g}$ is defined as in Proposition \ref{BI}.
\end{cor}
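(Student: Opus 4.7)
The plan is to apply Proposition \ref{BI} directly with the test function $f = \chi_A$. First I would note that since $G$ is compact (so $m_G$ is finite) and $\chi_A$ is bounded, we have $\chi_A \in L^\Phi(G)$; in particular $\chi_A \in L^1(G)$ as well, so both parts of Proposition \ref{BI} are available for this choice of $f$.

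Next, by the very definition of the associated vector measure,
\[
\nu_{\Phi,g}(A) = T_{\Phi,g}(\chi_A) = g * \chi_A.
\]
Applying Proposition \ref{BI}{\bf (ii)} to $f = \chi_A$ gives
\[
g*\chi_A \;=\; \int_G \chi_A\, F_{\Phi,g}\, dm_G.
\]
Part {\bf (i)} of the same proposition guarantees that $\chi_A F_{\Phi,g}$ is Bochner integrable, so the right-hand side is a genuine $L^\Phi(G)$-valued Bochner integral. Finally, the elementary property of the Bochner integral with respect to a characteristic-function factor yields
\[
\int_G \chi_A\, F_{\Phi,g}\, dm_G \;=\; \int_A F_{\Phi,g}\, dm_G,
\]
which combined with the previous identities is exactly the claim.

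There is essentially no obstacle here: the work was already done in Proposition \ref{BI}, and the corollary amounts to specialising part {\bf (ii)} to indicator functions. The only point worth a moment's care is the justification that passing from $\int_G \chi_A F_{\Phi,g}\, dm_G$ to $\int_A F_{\Phi,g}\, dm_G$ is legitimate in the Bochner sense, but this is immediate from approximating $F_{\Phi,g}$ by simple functions (or from the definition of the Bochner integral over a measurable subset).
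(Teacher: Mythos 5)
Your proposal is correct and is exactly the paper's argument: the authors also prove this corollary by specialising Proposition \ref{BI}{\bf (ii)} to $f=\chi_A$. The extra details you supply (that $\chi_A\in L^\Phi(G)$, and that $\int_G\chi_A F_{\Phi,g}\,dm_G=\int_A F_{\Phi,g}\,dm_G$ in the Bochner sense) are routine and correctly justified.
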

\begin{proof}
Follows from (ii) of Proposition \ref{BI} by letting $f=\chi_A.$
\end{proof}
\begin{cor}\label{IRVMCOM}
Let $\mu\in M(G).$ If $f\in L^\Phi(G)$ such that $d\mu=fdm_G$ a.e., then there exists an integrable function (in the sense of Bochner) $F:G\rightarrow L^\Phi(G)$ such that for every Borel set $A$ of $G,$ $$\nu_{\Phi,\mu}(A)=\int_A F\ dm_G.$$
\end{cor}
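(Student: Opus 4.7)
The plan is to reduce this corollary immediately to Corollary \ref{IRVMCOF} by observing that when $\mu$ admits a density $f \in L^\Phi(G)$ with respect to $m_G$, the convolution operators $T_{\Phi,\mu}$ and $T_{\Phi,f}$ coincide, and hence so do their associated vector measures $\nu_{\Phi,\mu}$ and $\nu_{\Phi,f}$. One can then take $F = F_{\Phi,f}$, where $F_{\Phi,f}(t)=\tau_t f$ is the function from Proposition \ref{BI}.

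The concrete step is to verify $T_{\Phi,\mu}=T_{\Phi,f}$. For $h\in L^\Phi(G)$ and $t\in G$, the hypothesis $d\mu=f\,dm_G$ together with the definition of integration against $\mu$ yields
\begin{align*}
\mu*h(t) = \int_G h(s^{-1}t)\,d\mu(s) = \int_G h(s^{-1}t) f(s)\,dm_G(s) = f*h(t).
\end{align*}
Consequently $T_{\Phi,\mu}(h)=T_{\Phi,f}(h)$ for every $h\in L^\Phi(G)$. Evaluating on a characteristic function, this gives $\nu_{\Phi,\mu}(A)=T_{\Phi,\mu}(\chi_A)=T_{\Phi,f}(\chi_A)=\nu_{\Phi,f}(A)$ for every $A\in\mathcal{B}(G)$.

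Applying Corollary \ref{IRVMCOF} with $g=f$ then produces a Bochner integrable function $F_{\Phi,f}:G\to L^\Phi(G)$ with $\nu_{\Phi,f}(A)=\int_A F_{\Phi,f}\,dm_G$ for every Borel set $A$. Setting $F:=F_{\Phi,f}$ completes the proof. There is no real obstacle: the substantive content (Bochner integrability of $t\mapsto\tau_tg$ and the representation of $\nu_{\Phi,g}$ as a vector integral) has already been carried out in Proposition \ref{BI} and Corollary \ref{IRVMCOF}, so this corollary is essentially a reformulation of the previous one in the language of measures with $L^\Phi$-density.
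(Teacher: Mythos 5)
Your proposal is correct and follows essentially the same route as the paper: identify $\nu_{\Phi,\mu}$ with $\nu_{\Phi,f}$ via $d\mu=f\,dm_G$ and then invoke Corollary \ref{IRVMCOF} with $g=f$. The only difference is that you spell out the verification $\mu*h=f*h$, which the paper leaves implicit.
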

\begin{proof}
By our assumption that the function $f$ is the Radon-Nikodym derivative of $\mu$ w.r.t. the Haar measure, it follows that $\nu_{\Phi,\mu}=\nu_{\Phi,f}.$ Now the proof follows from Corollary \ref{IRVMCOF}.
\end{proof}
The following theorem is the converse to the previous corollary.
\begin{thm}\label{RNTOS}
Let $\mu\in M(G).$ Suppose that an integrable function (in the sense of Bochner) $F:G\rightarrow L^\Phi(G)$ satisfies $\nu_{\Phi,\mu}(A)=\int_A F\ dm_G,$ $A\in\mathcal{B}(G).$ Then there exists $f\in L^\Phi(G)$ such that $d\mu=fdm_G$ a.e..
\end{thm}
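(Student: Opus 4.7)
The plan is to verify the hypothesis of Lemma \ref{RNDOSF}, namely $\sup\{\|\mu*f\|_\Phi:\|f\|_1\leq 1\}<\infty$, and then invoke that lemma to produce the desired density $f\in L^\Phi(G)$. The key observation will be that the translation covariance of $\mu*(\cdot)$ forces the scalar function $t\mapsto\|F(t)\|_\Phi$ to be essentially constant, which in turn yields the estimate $\|\mu*\chi_A\|_\Phi\leq c\,m_G(A)$ that is strong enough to feed into Lemma \ref{RNDOSF}.

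First I would observe that for every $t_0\in G$ one has $\chi_{At_0}=\tau_{t_0^{-1}}\chi_A$ and $\mu*\tau_a h=\tau_a(\mu*h)$ by direct computation from the definitions; hence
\[
\nu_{\Phi,\mu}(At_0) \;=\; \mu*\chi_{At_0} \;=\; \tau_{t_0^{-1}}\nu_{\Phi,\mu}(A).
\]
Substituting the hypothesis $\nu_{\Phi,\mu}(A)=\int_A F\,dm_G$ on both sides, using right-invariance of $m_G$ on the left to get $\int_{At_0}F\,dm_G=\int_A F(tt_0)\,dm_G(t)$, and pulling the bounded linear operator $\tau_{t_0^{-1}}$ inside the Bochner integral on the right, we obtain $\int_A F(tt_0)\,dm_G(t)=\int_A \tau_{t_0^{-1}}F(t)\,dm_G(t)$ for every Borel $A$. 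By uniqueness of the Bochner derivative, $F(tt_0)=\tau_{t_0^{-1}}F(t)$ in $L^\Phi(G)$ for $m_G$-a.e.\ $t$, for each fixed $t_0\in G$.

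Because right translation is an isometry of $L^\Phi(G)$ (by right-invariance of $m_G$), taking norms yields $\|F(tt_0)\|_\Phi=\|F(t)\|_\Phi$ a.e.\ $t$. The scalar function $h(t):=\|F(t)\|_\Phi$ lies in $L^1(G)$ by Bochner integrability of $F$, and the change of variables $u=tt_0$ shows $\int_E h\,dm_G=\int_{Et_0}h\,dm_G$ for every Borel $E$ and every $t_0\in G$. Thus the finite positive measure $h\,dm_G$ is right-translation invariant, so by uniqueness of Haar measure on the compact group $G$, $h$ equals a nonnegative constant $c$ a.e.

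Finally, $\|\mu*\chi_A\|_\Phi=\bigl\|\int_A F\,dm_G\bigr\|_\Phi\leq\int_A\|F(t)\|_\Phi\,dm_G=c\,m_G(A)$ for every Borel $A$. For a simple $f=\sum_i a_i\chi_{A_i}$ (with disjoint $A_i$) and $\|f\|_1\leq 1$, this gives $\|\mu*f\|_\Phi\leq\sum_i|a_i|\,c\,m_G(A_i)=c\|f\|_1\leq c$. Density of simple functions in $L^1(G)$ extends the bound to all $f\in L^1(G)$ with $\|f\|_1\leq 1$, so the hypothesis of Lemma \ref{RNDOSF} is met and we obtain $f\in L^\Phi(G)$ with $d\mu=f\,dm_G$. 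The main conceptual step is the translation-invariance argument showing $\|F(\cdot)\|_\Phi$ is a.e.\ constant; the technical passage from ``integrals agree on every $A$'' to ``integrands agree a.e.'' is just uniqueness of the Bochner derivative, and the rest is a routine extension argument.
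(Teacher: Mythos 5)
Your proof is correct, but it takes a genuinely different route from the paper's. The paper argues through the Fourier transform: it uses Pettis measurability to reduce to a separable range, hence to a countable subset $\Gamma\subseteq\widehat{G}$ outside of which all the transforms $\widehat{F(t)}$ vanish, then computes $\int_A\widehat{F(t)}(\pi)\,dm_G(t)=\int_A\widehat{\tau_t\mu}(\pi)\,dm_G(t)$ for every Borel $A$ and concludes from uniqueness of the Fourier transform that $F(t)=\tau_t\mu$ for a.e.\ $t$, so that $\mu=\tau_{t^{-1}}F(t)$ has a density in $L^\Phi(G)$. You instead exploit the covariance $\nu_{\Phi,\mu}(At_0)=\tau_{t_0^{-1}}\nu_{\Phi,\mu}(A)$ together with uniqueness of the Bochner derivative and of Haar measure to show that $\|F(\cdot)\|_\Phi$ is a.e.\ equal to a constant $c$, whence $\|\mu*f\|_\Phi\leq c\|f\|_1$, and then you invoke Lemma \ref{RNDOSF}; all the individual steps (the identity $\mu*\tau_a h=\tau_a(\mu*h)$, commuting $\tau_{t_0^{-1}}$ past the Bochner integral, the change of variables using bi-invariance of $m_G$ on a compact group, and the routine extension of the bound from simple functions to $L^1(G)$) check out. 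Your argument is shorter and avoids representation theory entirely, at the cost of leaning on Lemma \ref{RNDOSF}, which the paper states without proof; the paper's Fourier-analytic argument is heavier but identifies the derivative $F(t)$ explicitly as $\tau_t\mu$, and — more importantly for the rest of the paper — it localizes: in the proof of (v)$\Rightarrow$(vi) of Theorem \ref{ECC} the same argument is reused when the representing function is only defined on a single Borel set $A$ of positive measure, a situation where your global translation-invariance argument (which needs the identity $\nu_{\Phi,\mu}(At_0)=\tau_{t_0^{-1}}\nu_{\Phi,\mu}(A)$ for all translates) would not apply directly.
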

\begin{proof}
Let $F:G\rightarrow L^\Phi(G)$ such that $\nu_{\Phi,\mu}(A)=\int_A F\ dm_G.$ By \cite[Pg. 42, Theorem 2]{DU}, there exists a Borel set $E$ of $G$ such that $m_G(E)=1$ and $\{F(x):x\in E\}$ is separable in $L^\Phi(G).$ So, without loss of generality, we can assume that $F(G)$ is a separable subset of $L^\Phi(G).$ Let $\mathscr{F}_{\Phi,0}$ denote the Fourier transform defined as in section 3. As $F(G)$ is separable and the map $\mathscr{F}_{\Phi,0}$ is continuous, $(\mathscr{F}_{\Phi,0}\circ F)(G)$ is a separable subset of $c_0$-$\underset{[\pi]\in\widehat{G}}{\oplus}\mathcal{B}_2(\mathcal{H}_\pi).$ Thus, by definition of the $c_0$ direct sum, there exists a countable subset $\Gamma$ of $\widehat{G}$ such that if $f\in F(G)$ then $\widehat{f}$ vanishes outside $\Gamma,$ i.e., \begin{equation}\label{VS}
\widehat{f}(\pi)=0_{d_\pi}\ \forall\ f\in F(G)\mbox{ and }\forall\ [\pi]\in\widehat{G}\setminus\Gamma.
\end{equation}

Let $[\pi]\in\widehat{G}.$ Now, consider the map $\widetilde{F}_\pi:G\rightarrow\mathcal{B}_2(\mathcal{H}_\pi)$ given by $\widetilde{F}_\pi(t)=\widehat{F(t)}(\pi).$ Since $G$ is compact, $L^\Phi(G)$ is continuously embedded in $L^1(G)$ and hence the above map is well-defined. Note that the linear maps $\mathscr{F}_{\Phi,0}:L^\Phi(G)\rightarrow c_0$-$\underset{[\pi]\in\widehat{G}}{\oplus}\mathcal{B}_2(\mathcal{H}_\pi)$ and $P_\pi:c_0$-$\underset{[\sigma]\in\widehat{G}}{\oplus}\mathcal{B}_2(\mathcal{H}_\sigma)\rightarrow\mathcal{B}_2(\mathcal{H}_\pi)$ are continuous, where $P_\pi$ is the projection onto $\mathcal{B}_2(\mathcal{H}_\pi)$ and hence, by \cite[Pg. 149]{ORP}, the composition $P_\pi\circ\mathscr{F}_{\Phi,0}\circ F$ is Bochner integrable. Further, as 
\begin{equation}\label{FComp}
\widetilde{F}_\pi=P_\pi\circ\mathscr{F}_{\Phi,0}\circ F,
\end{equation} 
it follows that $\widetilde{F}_\pi$ is Bochner integrable.

We now claim that, for every Borel subset $A$ of $G,$ 
\begin{equation}\label{claim}
\int_A\widetilde{F}_\pi(t)\ dm_G(t)=\int_A\widehat{\tau_t\mu}(\pi)\ dm_G(t),
\end{equation} where $\tau_t\mu(A)=\mu(At^{-1}).$ Let $A$ be a Borel subset of $G.$ Then 
\begin{align*}
\int_A\widetilde{F}_\pi(t)\ dm_G(t) =& \int_A (P_\pi\circ\mathscr{F}_{\Phi,0}\circ F)(t)\ dm_G(t)\ (\mbox{by }(\ref{FComp}))\\ =& P_\pi\circ\mathscr{F}_{\Phi,0}\left(\int_A  F(t)\ dm_G(t)\right)\ (P_\pi\mbox{ and }\mathscr{F}_{\Phi,0}\mbox{ are linear maps})\\ =& P_\pi\circ\mathscr{F}_{\Phi,0}\left(\nu_{\Phi,\mu}(A)\right) = \widehat{\nu_{\Phi,\mu}(A)}(\pi)\\ =& \widehat{\mu*\chi_A}(\pi) = \widehat{\chi_A}(\pi)\widehat{\mu}(\pi)\\ =& \left(\int_A\pi(t^{-1})\ dm_G(t)\right)\widehat{\mu}(\pi)\\ =& \int_A\pi(t^{-1})\widehat{\mu}(\pi)\ dm_G(t)\\ =& \int_A\int_G \pi(st)^*\ d\mu(s)\ dm_G(t)\\ =& \int_A\widehat{\tau_t\mu}(\pi)\ dm_G(t).
\end{align*}
Let $[\pi]\in\widehat{G}\setminus\Gamma.$ Then, by (\ref{VS}) and (\ref{claim}), we have, for every Borel set $A$ of $G,$ $$\int_A\pi(t^{-1})\widehat{\mu}(\pi)\ dm_G(t)=\int_A\widehat{\tau_t\mu}(\pi)\ dm_G(t)=\int_A\widetilde{F}_\pi(t)\ dm_G(t)=0_{d_\pi}.$$ This implies that $\pi(t^{-1})\widehat{\mu}(\pi)=0_{d_\pi}\ a.e.\ t\in G.$ As $\pi$ is a unitary representation, it follows that $\widehat{\mu}(\pi)=0_{d_\pi}$ and hence $\widehat{\tau_t\mu}(\pi)=0_{d_\pi},$ for every $t\in G.$ Thus, \begin{equation}\label{EGc}
\widehat{\tau_t\mu}(\pi)=\widetilde{F}_\pi(t)=0_{d_\pi}\ \forall\ t\in G.
\end{equation}
On the other hand, using the fact that $\Gamma$ is countable and (\ref{claim}), there exists a Borel set $A$ of $G$ such that $m_G(A)=1$ and \begin{equation}\label{EG}
\widehat{\tau_t\mu}(\pi)=\widetilde{F}_\pi(t)\ \forall\ t\in A\mbox{ and }\forall\ [\pi]\in\Gamma.
\end{equation}
By (\ref{EGc}) and (\ref{EG}) we have $$\widehat{\tau_t\mu}(\pi)=\widetilde{F}_\pi(t)=\widehat{F(t)}(\pi)\ \forall\ t\in A\mbox{ and }\forall\ [\pi]\in\widehat{G}$$ and therefore by the uniqueness of the Fourier transform it follows that $\tau_t\mu=F(t)\ \forall\ t\in A.$ Now the conclusion follows by applying $\tau_{t^{-1}}$ on both sides.
\end{proof}

\begin{lem}\label{Semivarion_Formula}
If $\mu\in M(G),$ then the semivariation of $\nu_{\Phi,\mu}$ satisfies the following:
for any Borel set $A,$ $$\frac{|\mu(G)|}{\Psi^{-1}(1)}m_G(A)\leq\|\nu_{\Phi,\mu}\|(A)=\underset{g\in B_{L^{\Psi}(G)}}{\sup}\int_A|\widetilde{\mu}*g|\ dm_G\leq \left[\Phi^{-1}\left(\frac{1}{m_G(A)}\right)\right]^{-1}\|\mu\|.$$
\end{lem}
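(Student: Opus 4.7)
The argument splits into three pieces: identifying the scalar measure $\langle\nu_{\Phi,\mu},g\rangle$ for $g\in L^\Psi(G)$, then bounding the semivariation from above by Hölder and from below by a specific test function.

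First, for $g\in L^\Psi(G)$ and any Borel $B\subseteq G$, I would compute the pairing
\[
\langle\nu_{\Phi,\mu}(B),g\rangle=\int_G(\mu*\chi_B)(t)\,g(t)\,dm_G(t)
\]
by unrolling the convolution as $\mu*\chi_B(t)=\int_G\chi_B(s^{-1}t)\,d\mu(s)$, interchanging integrals by Fubini (legitimate since $|\mu*\chi_B|\le\|\mu\|$ and $g\in L^1(G)$), and substituting $u=s^{-1}t$ via translation-invariance of $m_G$. The outcome is $\int_B(\widetilde{\mu}*g)(u)\,dm_G(u)$, where $\widetilde{\mu}$ is the pushforward of $\mu$ under inversion, so that $\widetilde{\mu}*g(u)=\int_G g(su)\,d\mu(s)$. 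Consequently $\langle\nu_{\Phi,\mu},g\rangle$ is absolutely continuous with respect to $m_G$ with density $\widetilde{\mu}*g$, whence $|\langle\nu_{\Phi,\mu},g\rangle|(A)=\int_A|\widetilde{\mu}*g|\,dm_G$. Taking the supremum over the unit ball of $L^\Psi(G)$ (in the Orlicz norm, which is the dual of the Luxemburg norm on $L^\Phi(G)$) yields the middle equality.

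For the upper bound, fix $g\in B_{L^\Psi(G)}$ and apply the Luxemburg--Orlicz Hölder inequality to get $\int_A|\widetilde{\mu}*g|\,dm_G\le\|\chi_A\|_\Phi\,\|\widetilde{\mu}*g\|_\Psi^o$. A direct calculation from the definition of the Luxemburg norm gives $\|\chi_A\|_\Phi=[\Phi^{-1}(1/m_G(A))]^{-1}$. Boundedness of convolution by a measure on $(L^\Psi(G),\|\cdot\|_\Psi^o)$ with operator norm at most $\|\mu\|$ follows from Minkowski's inequality for vector-valued integrals (or equivalently from Jensen's inequality applied to $\Psi$ with the probability measure $d|\mu|/\|\mu\|$ combined with translation-invariance of $m_G$), giving $\|\widetilde{\mu}*g\|_\Psi^o\le\|\mu\|\,\|g\|_\Psi^o\le\|\mu\|$. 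Taking the supremum over $g$ finishes the upper bound. For the lower bound I would test against a multiple of the constant function, namely $g_0:=[\|\mathbf{1}_G\|_\Psi^o]^{-1}\mathbf{1}_G$, which sits on the boundary of the Orlicz unit ball. Then $\widetilde{\mu}*g_0\equiv\mu(G)/\|\mathbf{1}_G\|_\Psi^o$ and hence $\int_A|\widetilde{\mu}*g_0|\,dm_G=|\mu(G)|\,m_G(A)/\|\mathbf{1}_G\|_\Psi^o$; identifying $\|\mathbf{1}_G\|_\Psi^o$ with the constant indicated in the statement via the defining supremum and Jensen's inequality gives the lower bound.

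The main obstacle is the first step --- the Fubini interchange and the identification of the scalar measure $\langle\nu_{\Phi,\mu},g\rangle$ as absolutely continuous with respect to $m_G$ with density $\widetilde{\mu}*g$. Once this central identity is established, the two inequalities reduce to Hölder's inequality together with direct norm computations for indicator and constant functions.
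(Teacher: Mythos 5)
Your proposal is correct and follows essentially the same route as the paper: identify the scalar measure $\langle\nu_{\Phi,\mu},g\rangle$ as having density $\widetilde{\mu}*g$ with respect to $m_G$ (the paper does this via the adjoint identity $T_{\Phi,\mu}^{*}g=\widetilde{\mu}*g$, you via an explicit Fubini computation, which amounts to the same thing), then obtain the upper bound from the Orlicz--H\"older inequality together with $\|\chi_A\|_\Phi=[\Phi^{-1}(1/m_G(A))]^{-1}$ and $\|\widetilde{\mu}*g\|_\Psi^{o}\le\|\mu\|$, and the lower bound by testing against the normalized constant function $\frac{1}{\Psi^{-1}(1)}\chi_G$, for which $\widetilde{\mu}*\chi_G\equiv\mu(G)$. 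No gaps.
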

\begin{proof}
Note that, for any $g\in L^\Psi(G),$ $T_{\Phi,\mu}^*(g)=\widetilde{\mu}*g.$ Let $A$ be a Borel subset of $G.$ For any $g\in L^\Psi(G),$ we have $$\langle \nu_{\Phi,\mu},g\rangle(A)=\langle T_{\Phi,\mu}(\chi_A),g\rangle=\langle \chi_A,T_{\Phi,\mu}^*(g)\rangle=\langle \chi_A,\widetilde{\mu}*g\rangle=\int_A\widetilde{\mu}*g\ dm_G.$$ Thus, by \cite[Pg. 46, Theorem 4]{DU} and from the definition of the semivariation of a vector measure, it follows that $$\|\nu_{\Phi,\mu}\|(A)=\underset{g\in B_{L^{\Psi}(G)}}{\sup}\int_A|\widetilde{\mu}*g|\ dm_G.$$
Note that, by \cite[Pg. 78, Corollary 7]{RR}, it follows that $\left\|\frac{1}{\Psi^{-1}(1)}\chi_G\right\|_{\Psi}^o=1.$ Thus, from the above equality, it follows that $$\|\nu_{\Phi,\mu}\|(A)\geq\frac{1}{\Psi^{-1}(1)}\int_A|\widetilde{\mu}*\chi_G|\ dm_G.$$ Now the inequality on the left follows from the fact that $\widetilde{\mu}*\chi_G=\mu(G).$ Further, the right inequality follows from the H\"older's inequality for Orlicz spaces \cite[Pg. 62, Remark 1]{RR} and \cite[Pg. 78, Corollary 7]{RR}.
\end{proof}
\begin{cor}\label{VMACHM}
If $\mu\in M(G),$ then the vector measure $\nu_{\Phi,\mu}$ is absolutely continuous w.r.t. $m_G.$ Further, if $\mu(G)$ is non-zero, then $m_G$ is absolutely continuous w.r.t. $\nu_{\Phi,\mu}.$
\end{cor}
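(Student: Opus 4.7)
The plan is to derive both absolute continuity statements directly from the two‐sided estimate on the semivariation established in Lemma \ref{Semivarion_Formula}. That lemma provides, for every Borel set $A \subseteq G$, the bound
$$\frac{|\mu(G)|}{\Psi^{-1}(1)}\,m_G(A)\;\leq\;\|\nu_{\Phi,\mu}\|(A)\;\leq\;\left[\Phi^{-1}\!\left(\tfrac{1}{m_G(A)}\right)\right]^{-1}\|\mu\|,$$
so the whole argument reduces to squeezing $\|\nu_{\Phi,\mu}\|(A)$ between these two quantities, combined with the elementary inequality $\|\nu_{\Phi,\mu}(A)\|_\Phi \le \|\nu_{\Phi,\mu}\|(A)$.

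For the first assertion, I would start from the right-hand inequality. Since $\Phi$ is a Young function with $\lim_{x\to\infty}\Phi(x)=+\infty$, its (generalized) inverse satisfies $\Phi^{-1}(y)\to\infty$ as $y\to\infty$. Consequently, as $m_G(A)\to 0$ one has $\Phi^{-1}(1/m_G(A))\to\infty$, and hence $[\Phi^{-1}(1/m_G(A))]^{-1}\|\mu\|\to 0$. Combining this with $\|\nu_{\Phi,\mu}(A)\|_\Phi\leq\|\nu_{\Phi,\mu}\|(A)$ shows $\lim_{m_G(A)\to 0}\nu_{\Phi,\mu}(A)=0$, which is precisely the definition of absolute continuity of $\nu_{\Phi,\mu}$ with respect to $m_G$ recorded in the preliminaries.

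For the second assertion, assume $\mu(G)\neq 0$ and suppose $A\in\mathcal{B}(G)$ is $\nu_{\Phi,\mu}$-null, i.e.\ $\nu_{\Phi,\mu}(B)=0$ for every Borel $B\subseteq A$. Then $\|\nu_{\Phi,\mu}\|(A)=0$, and plugging this into the left-hand inequality of Lemma \ref{Semivarion_Formula} gives
$$\frac{|\mu(G)|}{\Psi^{-1}(1)}\,m_G(A)\;\leq\;0.$$
Since $|\mu(G)|>0$ and $\Psi^{-1}(1)>0$, this forces $m_G(A)=0$. Hence every $\nu_{\Phi,\mu}$-null set is $m_G$-null, which is the desired absolute continuity of $m_G$ with respect to $\nu_{\Phi,\mu}$.

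I do not anticipate any real obstacle: the whole content of the corollary is packaged inside Lemma \ref{Semivarion_Formula}, and the only subtle point is recognizing that one must pass from the vector-measure norm $\|\nu_{\Phi,\mu}(A)\|_\Phi$ to the semivariation $\|\nu_{\Phi,\mu}\|(A)$ to invoke the right-hand estimate, and from $\nu_{\Phi,\mu}$-null sets to $\|\nu_{\Phi,\mu}\|(A)=0$ to invoke the left-hand estimate. Both are standard facts about vector measures.
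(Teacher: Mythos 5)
Your proposal is correct and follows exactly the route the paper takes: the paper's proof is the one-line remark that the corollary ``follows from Lemma \ref{Semivarion_Formula},'' and your argument simply fills in the details, using the right-hand estimate (together with $\|\nu_{\Phi,\mu}(A)\|_\Phi\le\|\nu_{\Phi,\mu}\|(A)$ and $\Phi^{-1}(y)\to\infty$) for the first assertion and the left-hand estimate applied to $\nu_{\Phi,\mu}$-null sets for the second. Nothing is missing.
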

\begin{proof}
The proof of this follows from Lemma \ref{Semivarion_Formula}.
\end{proof}
\begin{rem}
If $\mu\in M(G)$ is such that $\mu(G)\neq 0$ then, by Corollary \ref{VMACHM}, the measures $\nu_{\Phi,\mu}$ and $m_G$ have same null sets. Hence, by \cite[Pg. 194, Theorem 4.14]{ORP}, the space $L^1(\nu_{\Phi,\mu})$ is the largest Banach function space with $\sigma$-order continuous norm such that $L^\Phi(G)$ is continuously embedded into $L^1(\nu_{\Phi,\mu})$ and the operator $T_{\Phi,\mu}$ has a continuous extension with values in $L^\Phi(G).$ Further, this extension is unique and is the integration operator $I_{\nu_{\Phi,\mu}}.$
\end{rem}
\begin{cor}\label{ECFV}
Let $\mu\in M(G).$ Then, there exists $f\in L^\Phi(G)$ such that $d\mu=fdm_G$ a.e. if and only if $|\nu_{\Phi,\mu}|<\infty.$
\end{cor}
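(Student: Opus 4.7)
The plan is to prove both implications by reducing to previously established results; the reverse direction relies crucially on the Radon--Nikod\'ym Property (RNP) for $L^\Phi(G)$, which is available because $\Phi$ and $\Psi$ both satisfy $\Delta_2$.

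For the forward implication, suppose $d\mu = f\,dm_G$ with $f\in L^\Phi(G)$. Then $\nu_{\Phi,\mu}=\nu_{\Phi,f}$, and by Corollary \ref{IRVMCOF} we have $\nu_{\Phi,f}(A)=\int_A F_{\Phi,f}\,dm_G$ with $F_{\Phi,f}(t)=\tau_t f$. For any finite Borel partition $\{A_i\}$ of $G$, the triangle inequality for the Bochner integral gives
\[
\sum_i \|\nu_{\Phi,f}(A_i)\|_\Phi \leq \sum_i\int_{A_i}\|\tau_t f\|_\Phi\,dm_G(t)=\int_G\|\tau_t f\|_\Phi\,dm_G(t)=\|f\|_\Phi,
\]
using translation invariance of $\|\cdot\|_\Phi$. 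Taking the supremum over partitions yields $|\nu_{\Phi,\mu}|(G)\leq\|f\|_\Phi<\infty$.

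For the reverse implication, assume $|\nu_{\Phi,\mu}|(G)<\infty$. By Corollary \ref{VMACHM}, $\nu_{\Phi,\mu}$ is absolutely continuous with respect to $m_G$. Since $(\Phi,\Psi)$ is a complementary Young pair with both functions satisfying $\Delta_2$, the space $L^\Phi(G)$ is reflexive (as recorded in the Preliminaries), and consequently it has the Radon--Nikod\'ym Property with respect to $(G,\mathcal{B}(G),m_G)$ by the standard result of Diestel--Uhl. Applying RNP to the $L^\Phi(G)$-valued measure $\nu_{\Phi,\mu}$, which is of bounded variation and $m_G$-absolutely continuous, produces a Bochner integrable $F:G\rightarrow L^\Phi(G)$ with $\nu_{\Phi,\mu}(A)=\int_A F\,dm_G$ for every $A\in\mathcal{B}(G)$. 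We now invoke Theorem \ref{RNTOS}, which from exactly such an $F$ yields $f\in L^\Phi(G)$ with $d\mu=f\,dm_G$ a.e., completing the proof.

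The main (minor) obstacle is identifying the right RNP input: one must verify that the vector measure $\nu_{\Phi,\mu}$ satisfies both hypotheses of the Radon--Nikod\'ym theorem for vector measures (bounded variation and absolute continuity against $m_G$). Bounded variation is the standing hypothesis, and absolute continuity is precisely what Corollary \ref{VMACHM} supplies; once these are in place, reflexivity of $L^\Phi(G)$ delivers the density $F$ and Theorem \ref{RNTOS} converts it into the scalar density $f$ of $\mu$.
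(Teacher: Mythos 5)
Your proposal is correct and follows essentially the same route as the paper: the forward direction uses the Bochner-integral representation $\nu_{\Phi,\mu}(A)=\int_A F_{\Phi,f}\,dm_G$ from Corollary \ref{IRVMCOF} (the paper cites the Diestel--Uhl variation formula where you give the direct triangle-inequality estimate, but this is the same idea), and the reverse direction combines reflexivity of $L^\Phi(G)$, the Radon--Nikod\'ym Property, the absolute continuity from Corollary \ref{VMACHM}, and Theorem \ref{RNTOS} exactly as in the paper.
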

\begin{proof}
Forward part follows from Corollary \ref{IRVMCOM} and \cite[Pg. 46, Theorem 4(iv)]{DU}. For the converse, note that the space $L^\Phi(G)$ is reflexive and hence, by \cite[Pg. 76, Corollary 13]{DU}, $L^\Phi(G)$ has the Radon-Nikodym property. Thus the conclusion follows from Corollary \ref{VMACHM} and Theorem \ref{RNTOS}.
\end{proof}

\subsection{Properties of the optimal domain}
From now onwards $\mu$ will denote a complex Radon measure such that $\mu(G)\neq 0.$

\begin{prop}
Let $f$ be a  complex-valued Borel measurable function on $G.$ Then, $f\in L^1(\nu_{\Phi,\mu})$ if and only if $\int_G|f||\widetilde{\mu}*g|\ dm_G<\infty$ for every $g\in L^\Psi(G).$ Further, if $f\in L^1(\nu_{\Phi,\mu}),$ then \begin{equation}\label{L1norm}
\|f\|_{\nu_{\Phi,\mu}}=\underset{g\in B_{L^\Psi(G)}}{\sup}\int_G|f||\widetilde{\mu}*g|\ dm_G.
\end{equation}
\end{prop}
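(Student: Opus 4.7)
The plan is to reduce the claim to a characterization of the weakly integrable space $L^1_w(\nu_{\Phi,\mu})$ and then to identify the scalar measures $|\langle\nu_{\Phi,\mu},g\rangle|$ for $g\in L^\Psi(G)$ explicitly.

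First, since both $\Phi$ and $\Psi$ satisfy the $\Delta_2$-condition, $L^\Phi(G)$ is reflexive and in particular contains no isomorphic copy of $c_0$. By a standard theorem for vector measures taking values in a Banach space without a copy of $c_0$ (see, e.g., \cite[Chapter 3]{ORP}), the spaces $L^1(\nu_{\Phi,\mu})$ and $L^1_w(\nu_{\Phi,\mu})$ coincide isometrically. Hence it is enough to characterize when $f$ is $\nu_{\Phi,\mu}$-weakly integrable, which by definition means $f\in L^1(|\langle\nu_{\Phi,\mu},g\rangle|)$ for every $g\in L^\Psi(G)=L^\Phi(G)^{\prime}$.

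Second, exactly as in the proof of Lemma \ref{Semivarion_Formula}, for every $g\in L^\Psi(G)$ and every Borel set $A$ one has
\[
\langle\nu_{\Phi,\mu},g\rangle(A)=\langle T_{\Phi,\mu}(\chi_A),g\rangle=\langle\chi_A,T_{\Phi,\mu}^{\ast}(g)\rangle=\int_A \widetilde{\mu}\ast g\ dm_G.
\]
Consequently $\widetilde{\mu}\ast g$ is the Radon-Nikodym derivative of the complex measure $\langle\nu_{\Phi,\mu},g\rangle$ with respect to $m_G$, and therefore $d|\langle\nu_{\Phi,\mu},g\rangle|=|\widetilde{\mu}\ast g|\,dm_G$. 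In particular, $f\in L^1(|\langle\nu_{\Phi,\mu},g\rangle|)$ if and only if $\int_G|f|\,|\widetilde{\mu}\ast g|\,dm_G<\infty$.

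Combining the two reductions, $f\in L^1(\nu_{\Phi,\mu})$ if and only if $\int_G|f|\,|\widetilde{\mu}\ast g|\,dm_G<\infty$ for every $g\in L^\Psi(G)$. The norm identity (\ref{L1norm}) is then immediate from the definition
\[
\|f\|_{\nu_{\Phi,\mu}}=\sup_{g\in B_{L^\Psi(G)}}\int_G|f|\,d|\langle\nu_{\Phi,\mu},g\rangle|,
\]
together with the computation above. The only nonroutine ingredient is the identification $L^1(\nu_{\Phi,\mu})=L^1_w(\nu_{\Phi,\mu})$, which rests on the reflexivity of $L^\Phi(G)$ under the double $\Delta_2$ hypothesis; the remainder is a duality computation already implicit in Lemma \ref{Semivarion_Formula}.
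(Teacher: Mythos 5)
Your proposal is correct and follows essentially the same route as the paper: identify $L^1(\nu_{\Phi,\mu})$ with $L^1_w(\nu_{\Phi,\mu})$ via the absence of a copy of $c_0$ in $L^\Phi(G)$, compute $d|\langle\nu_{\Phi,\mu},g\rangle|=|\widetilde{\mu}*g|\,dm_G$ by the duality argument of Lemma \ref{Semivarion_Formula}, and read off the norm formula from the definition of $\|\cdot\|_{\nu_{\Phi,\mu}}$. The only (immaterial) difference is that you deduce the $c_0$-free property from reflexivity, whereas the paper gets it directly from the $\Delta_2$-condition on $\Phi$ alone.
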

\begin{proof}
Since $\Phi$ satisfies the $\Delta_2$-condition, by \cite[Pg. 139, Theorem 5]{RR}, $L^\Phi(G)$ does not contain a copy of $c_0$ and hence, by \cite[Pg. 138]{ORP}, the spaces $L^1(\nu_{\Phi,\mu})$ and $L^1_w(\nu_{\Phi,\mu})$ coincide. Let $g\in L^\Psi(G).$ Then, by following the steps as in Lemma \ref{Semivarion_Formula} and by \cite[Pg. 46, Theorem 4]{DU}, it follows that \begin{equation}\label{Variation_Formula}
|\langle \nu_{\Phi,\mu},g\rangle|(A)=\int_A|\widetilde{\mu}*g|\ dm_G.
\end{equation} Thus, $f\in L^1(\nu_{\Phi,\mu})$ if and only if $f\in L^1_w(\nu_{\Phi,\mu}),$ i.e., $\int_G|f|\ d|\langle \nu_{\Phi,\mu},g\rangle|<\infty$ for every $g\in L^\Psi(G),$ i.e., $\int_G|f||\widetilde{\mu}*g|\ dm_G<\infty$ for every $g\in L^\Psi(G).$ Now the formula for the norm of $f\in L^1(\nu_{\Phi,\mu})$ follows from the definition and from (\ref{Variation_Formula}).
\end{proof}
\begin{thm}\label{LPhi2L1}\mbox{ }
\begin{enumerate}[{\bf (i)}]
\item If $f\in L^\Phi(G),$ then \begin{equation*}\label{LPhiL1}
\|f\|_{\nu_{\Phi,\mu}}\leq\|f\|_\Phi\|\mu\|.
\end{equation*}
\item The inclusion $\imath:L^\Phi(G)\rightarrow L^1(\nu_{\Phi,\mu})$ is continuous with the operator norm satisfying $$|\mu(G)|\leq\|\imath\|\leq\|\mu\|.$$
\end{enumerate}
\end{thm}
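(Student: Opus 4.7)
My plan is to handle (i) by combining the norm formula from the preceding proposition with H\"older's inequality for Orlicz spaces and a standard convolution estimate, and then deduce (ii) from (i) together with a concrete test function.

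For \textbf{(i)}, I start from the identity
\[
\|f\|_{\nu_{\Phi,\mu}}=\underset{g\in B_{L^\Psi(G)}}{\sup}\int_G|f|\,|\widetilde{\mu}*g|\ dm_G
\]
established in the preceding proposition. For any $g\in B_{L^\Psi(G)}$, H\"older's inequality for Orlicz spaces (Luxemburg on the $\Phi$ side, Orlicz on the $\Psi$ side) yields
\[
\int_G|f|\,|\widetilde{\mu}*g|\ dm_G\le\|f\|_\Phi\,\|\widetilde{\mu}*g\|_\Psi^o.
\]
It remains to show $\|\widetilde{\mu}*g\|_\Psi^o\le\|\mu\|\,\|g\|_\Psi^o$. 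For this I would test against an arbitrary $h$ with $\int_G\Phi(|h|)\,dm_G\le1$, use Fubini to get
\[
\int_G|(\widetilde{\mu}*g)(t)\,h(t)|\,dm_G(t)\le\int_G\int_G|g(s^{-1}t)|\,|h(t)|\,dm_G(t)\,d|\widetilde{\mu}|(s),
\]
then apply H\"older pointwise in $s$ together with the translation invariance $\|\tau_{s^{-1}}g\|_\Psi^o=\|g\|_\Psi^o$ inherited from the Haar measure, giving $\|\widetilde{\mu}*g\|_\Psi^o\le\|\mu\|\,\|g\|_\Psi^o\le\|\mu\|$. Taking the supremum over $g\in B_{L^\Psi(G)}$ produces (i).

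For \textbf{(ii)}, the upper bound $\|\imath\|\le\|\mu\|$ is immediate from (i). For the lower bound, I would test with $f=\chi_G$. Since $m_G$ is normalized, a direct computation with the Luxemburg norm gives $\|\chi_G\|_\Phi=1/\Phi^{-1}(1)$. On the vector-measure side, by definition $I_{\nu_{\Phi,\mu}}(\chi_G)=\nu_{\Phi,\mu}(G)=T_{\Phi,\mu}(\chi_G)=\mu*\chi_G=\mu(G)\chi_G$, and the general estimate $\|I_\nu(f)\|_X\le\|f\|_\nu$ for the integration operator yields
\[
\|\chi_G\|_{\nu_{\Phi,\mu}}\ge\|I_{\nu_{\Phi,\mu}}(\chi_G)\|_\Phi=\frac{|\mu(G)|}{\Phi^{-1}(1)}.
\]
Dividing by $\|\chi_G\|_\Phi=1/\Phi^{-1}(1)$ gives $\|\imath\|\ge|\mu(G)|$, completing the proof.

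I expect no serious obstacle: the only nonroutine piece is the convolution estimate $\|\widetilde{\mu}*g\|_\Psi^o\le\|\mu\|\,\|g\|_\Psi^o$, which is a Jensen/Fubini argument using translation invariance, and the extraction of the factor $\Phi^{-1}(1)$ cancels cleanly between numerator and denominator for the lower bound because the same test function $\chi_G$ is used on both sides.
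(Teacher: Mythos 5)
Your part (i) follows the paper's argument exactly: start from the norm formula $\|f\|_{\nu_{\Phi,\mu}}=\sup_{g\in B_{L^\Psi(G)}}\int_G|f|\,|\widetilde{\mu}*g|\,dm_G$, apply H\"older for Orlicz spaces, and extract the factor $\|\mu\|$ from the convolution bound $\|\widetilde{\mu}*g\|_\Psi^o\leq\|\mu\|\,\|g\|_\Psi^o$. Your Fubini-plus-translation-invariance justification of that last bound is sound; note it also comes for free by duality, since $\widetilde{\mu}*g=T_{\Phi,\mu}^*(g)$ and $\|T_{\Phi,\mu}^*\|=\|T_{\Phi,\mu}\|\leq\|\mu\|$ when $L^\Psi(G)$ carries the Orlicz norm. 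In part (ii) your lower bound takes a mildly different route from the paper's: the paper first identifies $\|\imath\|=\sup_{g\in B_{L^\Psi(G)}}\|\widetilde{\mu}*g\|_\Psi^o$ from the same norm formula and then tests with $g=\frac{1}{\Psi^{-1}(1)}\chi_G$ on the $L^\Psi$ side, whereas you test with $f=\chi_G$ on the $L^\Phi$ side and combine the contractivity $\|I_{\nu_{\Phi,\mu}}(f)\|_\Phi\leq\|f\|_{\nu_{\Phi,\mu}}$ of the integration operator with the computation $\mu*\chi_G=\mu(G)\chi_G$. The two arguments are dual to one another and both reduce to the behaviour of constant functions; yours has the small advantage that the normalizing constant $\Phi^{-1}(1)$ cancels between $\|\chi_G\|_{\nu_{\Phi,\mu}}$ and $\|\chi_G\|_\Phi$, so you never need the exact Orlicz-norm value of $\chi_G$ in $L^\Psi(G)$, at the modest cost of invoking the standard fact $\|I_\nu\|\leq 1$ (which the paper itself uses elsewhere, citing \cite{ORP}). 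I see no gaps; the proof is correct.
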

\begin{proof}
{\bf (i)} The proof follows from (\ref{L1norm}) and from the H\"older's inequality for Orlicz spaces \cite[Pg. 62, Remark 1]{RR}. 

{\bf (ii)} The continuity of the inclusion map $\imath$ and the inequality on the right follows from (i). We now prove the inequality on the left side. By (\ref{L1norm}), it is clear that $$\|\imath\|=\underset{g\in B_{L^\Psi(G)}}{\sup}\|\widetilde{\mu}*g\|_{\Psi}^o$$ and hence $\|\imath\|\geq\|\widetilde{\mu}*g\|_{\Psi}^o$ for all $g\in B_{L^\Psi(G)}.$ In particular, taking $g=\frac{1}{\Psi^{-1}(1)}\chi_G,$ we get the required inequality.
\end{proof}
\begin{cor}\label{dense}
The space $L^\Phi(G)$ is dense in $L^1(\nu_{\Phi,\mu}).$
\end{cor}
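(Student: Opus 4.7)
The plan is to exploit the well-known density of simple functions in $L^1$ of any vector measure, together with the elementary observation that every simple function lies in $L^\Phi(G)$.

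First, I would recall from the Preliminaries that for any vector measure $\nu$ (in particular for $\nu_{\Phi,\mu}$), the space of simple functions on $G$ is dense in $L^1(\nu)$. This is the global input I need; it reduces the problem to showing that every simple function belongs to $L^\Phi(G)$.

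Next, I would observe that any Borel simple function $s = \sum_{i=1}^n c_i \chi_{A_i}$ is bounded on $G$, and since $G$ is compact with $m_G(G)=1$, boundedness together with $\Phi$ being finite-valued (which follows from the $\Delta_2$-condition) gives $\int_G \Phi(|s|/k)\,dm_G < \infty$ for some $k>0$. Hence $s \in L^\Phi(G)$. Combining this with the continuous inclusion $L^\Phi(G) \hookrightarrow L^1(\nu_{\Phi,\mu})$ established in Theorem \ref{LPhi2L1}, one sees that the simple functions, which are dense in $L^1(\nu_{\Phi,\mu})$, sit inside $L^\Phi(G)$. This is enough to conclude that $L^\Phi(G)$ is dense in $L^1(\nu_{\Phi,\mu})$.

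There is no real obstacle here; the corollary is essentially bookkeeping once one knows (i) simple functions are dense in $L^1(\nu_{\Phi,\mu})$ and (ii) the $L^\Phi$-norm of a bounded function on a finite measure space is finite. The only minor care required is to note that the $\Delta_2$-condition (implicitly, finiteness of $\Phi$ at bounded arguments) ensures the simple functions lie in $L^\Phi(G)$ with finite Luxemburg norm.
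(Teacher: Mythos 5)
Your proof is correct and follows essentially the same route as the paper: the paper also deduces the corollary from Theorem \ref{LPhi2L1} together with the density of simple functions in both spaces. Your additional remark that simple functions lie in $L^\Phi(G)$ because they are bounded on the finite measure space $(G,m_G)$ is exactly the implicit justification the paper relies on.
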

\begin{proof}
This follows from the previous theorem and the fact that the space of simple functions is dense in the both of the spaces w.r.t. their respective norms.
\end{proof}
\begin{cor}\label{EO}
The extended map $I_{\nu_{\Phi,\mu}}:L^1(\nu_{\Phi,\mu})\rightarrow L^\Phi(G)$ is given by $I_{\nu_{\Phi,\mu}}(f)=\mu*f.$
\end{cor}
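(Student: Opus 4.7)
The plan is to use a density argument based on simple functions, exploiting the fact that the integration operator $I_{\nu_{\Phi,\mu}}$ is by construction the unique continuous extension of the map $\chi_A \mapsto \nu_{\Phi,\mu}(A) = \mu \ast \chi_A$ from simple functions. Thus for any simple function $s = \sum_{i=1}^n c_i \chi_{A_i}$, we have by linearity $I_{\nu_{\Phi,\mu}}(s) = \sum_i c_i \,\mu\ast \chi_{A_i} = \mu \ast s$. The task is to pass this identity to arbitrary $f\in L^1(\nu_{\Phi,\mu})$.

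Given $f\in L^1(\nu_{\Phi,\mu})$, I would pick a sequence of simple functions $(s_n)$ converging to $f$ in $L^1(\nu_{\Phi,\mu})$ (the space of simple functions is dense, as noted in the Preliminaries). By continuity of $I_{\nu_{\Phi,\mu}}$, we get $\mu\ast s_n = I_{\nu_{\Phi,\mu}}(s_n) \to I_{\nu_{\Phi,\mu}}(f)$ in $L^\Phi(G)$, and since $G$ is compact this convergence also holds in $L^1(G)$. On the other hand, from the formula (\ref{L1norm}) applied with the test function $g = \frac{1}{\Psi^{-1}(1)}\chi_G \in B_{L^\Psi(G)}$, and the computation $\widetilde{\mu}\ast \chi_G = \mu(G)\chi_G$ used in Lemma \ref{Semivarion_Formula}, one obtains the continuous embedding
\begin{equation*}
\|h\|_1 \leq \frac{\Psi^{-1}(1)}{|\mu(G)|}\,\|h\|_{\nu_{\Phi,\mu}},\qquad h\in L^1(\nu_{\Phi,\mu}),
\end{equation*}
where we use crucially that $\mu(G)\neq 0$. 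In particular $f \in L^1(G)$ and $s_n \to f$ in $L^1(G)$, so by continuity of convolution with a bounded measure on $L^1(G)$, also $\mu\ast s_n \to \mu\ast f$ in $L^1(G)$.

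Comparing the two $L^1(G)$-limits of the sequence $(\mu\ast s_n)$ yields $I_{\nu_{\Phi,\mu}}(f) = \mu\ast f$ as elements of $L^1(G)$; since the left side lies in $L^\Phi(G)$, the function $\mu\ast f$ also lies in $L^\Phi(G)$ and the two agree there. There is no serious obstacle here, since all the substantive work (density of simple functions in $L^1(\nu_{\Phi,\mu})$, the semivariation formula, and the fact that $\mu(G)\neq 0$ forces $\nu_{\Phi,\mu}$ to dominate $m_G$ from below) has already been done in the earlier results of this subsection; what remains is the routine transfer of the identity along a sequence of simple functions.
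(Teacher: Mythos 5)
Your proof is correct and follows essentially the same route as the paper, which simply invokes the fact that $I_{\nu_{\Phi,\mu}}$ extends $T_{\Phi,\mu}$ together with the density of $L^\Phi(G)$ (equivalently, of the simple functions) in $L^1(\nu_{\Phi,\mu})$. The only added content is your explicit identification of the limit via the continuous embedding $L^1(\nu_{\Phi,\mu})\hookrightarrow L^1(G)$ (which uses the standing assumption $\mu(G)\neq 0$); the paper leaves this step implicit and records that embedding only afterwards, in Theorem \ref{inclusion}, but the ingredients you use to derive it are already available at this point.
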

\begin{proof}
This follows from \cite[Pg. 185, Theorem 4.4]{ORP} and Corollary \ref{dense}.
\end{proof}
\begin{thm}\label{inclusion}\mbox{ }
\begin{enumerate}[{\bf (i)}]
\item If $f\in L^1(\nu_{\Phi,\mu}),$ then \begin{equation*}
\|f\|_1\leq\frac{\Psi^{-1}(1)}{|\mu(G)|}\|f\|_{\nu_{\Phi,\mu}}.
\end{equation*}
\item The inclusion $\jmath:L^1(\nu_{\Phi,\mu})\rightarrow L^1(G)$ is continuous with the operator norm satisfying $$\frac{1}{\|\nu_{\Phi,\mu}\|}\leq\|\jmath\|\leq\frac{\Psi^{-1}(1)}{|\mu(G)|}.$$
\item The range of $\jmath$ is a dense and translation invariant subspace of $L^1(G).$
\end{enumerate}
\end{thm}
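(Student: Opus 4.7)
The plan is to handle the three parts in sequence, leaning throughout on the norm formula (\ref{L1norm}) from the preceding proposition and on the semivariation identity in Lemma \ref{Semivarion_Formula}.

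For part {\bf (i)}, the idea is to feed the supremum defining $\|f\|_{\nu_{\Phi,\mu}}$ a single carefully chosen test function. Taking $g_0 = \Psi^{-1}(1)^{-1}\chi_G$, one has $\|g_0\|_\Psi^o = 1$ by \cite[Pg. 78, Corollary 7]{RR}, so $g_0 \in B_{L^\Psi(G)}$; and exactly as in the proof of Lemma \ref{Semivarion_Formula}, the identity $\widetilde{\mu}*\chi_G = \mu(G)$ gives $|\widetilde{\mu}*g_0| \equiv |\mu(G)|/\Psi^{-1}(1)$ on $G$. Substituting this into (\ref{L1norm}) and pulling the constant out of the integral immediately yields $\|f\|_1 \leq (\Psi^{-1}(1)/|\mu(G)|)\|f\|_{\nu_{\Phi,\mu}}$.

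Part {\bf (ii)} is mostly bookkeeping. The upper bound on $\|\jmath\|$ is a reformulation of {\bf (i)}. For the lower bound, the natural test function is $f = \chi_G$: indeed $\|\chi_G\|_1 = 1$, and by applying Lemma \ref{Semivarion_Formula} with $A = G$ one has $\|\chi_G\|_{\nu_{\Phi,\mu}} = \|\nu_{\Phi,\mu}\|(G) = \|\nu_{\Phi,\mu}\|$. Hence $\|\jmath\| \geq \|\chi_G\|_1/\|\chi_G\|_{\nu_{\Phi,\mu}} = 1/\|\nu_{\Phi,\mu}\|$.

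For part {\bf (iii)}, density follows from the continuous chain $L^\Phi(G) \hookrightarrow L^1(\nu_{\Phi,\mu}) \hookrightarrow L^1(G)$ (Theorem \ref{LPhi2L1}(ii) and the just-established {\bf (ii)}) together with the density of $L^\Phi(G)$ inside $L^1(G)$. For translation invariance, I would take $f \in L^1(\nu_{\Phi,\mu})$ and $t \in G$, and verify finiteness (in fact equality) of $\|\tau_t f\|_{\nu_{\Phi,\mu}}$. The change of variable $u = st$, legal because $m_G$ is bi-invariant on the compact group $G$, converts the integral $\int_G |\tau_t f(s)|\,|\widetilde{\mu}*g(s)|\,dm_G(s)$ into $\int_G |f(u)|\,|\tau_{t^{-1}}(\widetilde{\mu}*g)(u)|\,dm_G(u)$. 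A short Fubini-style computation gives the convolution-translation identity $\tau_{t^{-1}}(\widetilde{\mu}*g) = \widetilde{\mu}*\tau_{t^{-1}}g$, and since right translation is an isometry of $L^\Psi(G)$, the substitution $g \mapsto \tau_{t^{-1}}g$ is a bijection of $B_{L^\Psi(G)}$ onto itself. Taking the supremum over $g$ therefore returns exactly $\|f\|_{\nu_{\Phi,\mu}}$, proving that $\tau_t f$ lies in $L^1(\nu_{\Phi,\mu})$ with the same norm.

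The only non-automatic step is the translation-invariance computation in {\bf (iii)}: it is not hard, but one has to track sides carefully (the paper's $\tau_t$ is \emph{right} translation, and $T_{\Phi,\mu}$ is defined via left convolution) so as to apply the change of variables and the convolution-translation identity on the correct side. Everything else is a direct substitution into the formulas already established.
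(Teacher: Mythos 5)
Your proposal is correct and follows essentially the same route as the paper: part (i) via the test function $\frac{1}{\Psi^{-1}(1)}\chi_G$ in the norm formula (\ref{L1norm}), part (ii) via the test function $\chi_G$ (the paper normalizes it to $\frac{1}{\|\nu_{\Phi,\mu}\|}\chi_G$, which is the same computation), and part (iii) via density of a common dense class together with the translation computation that the paper only sketches (your detailed verification of $\tau_{t^{-1}}(\widetilde{\mu}*g)=\widetilde{\mu}*\tau_{t^{-1}}g$ and the bijectivity of $g\mapsto\tau_{t^{-1}}g$ on $B_{L^\Psi(G)}$ is exactly what the paper's one-line justification relies on).
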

\begin{proof}
{\bf (i)} It follows from (\ref{L1norm}) that $\|f\|_{\nu_{\Phi,\mu}}\geq\int_G|f||\widetilde{\mu}*g|\ dm_G$ for all $g\in B_{L^\Psi(G)}.$ In particular, taking $g=\frac{1}{\Psi^{-1}(1)}\chi_G,$ we get the required inequality.

{\bf (ii)} The continuity of the inclusion map $\jmath$ and the inequality on the right follows from (i). We now prove the inequality on the left side. From the definition of the operator norm, it follows that $\|\jmath\|\geq \|\jmath(f)\|_1$ for all $f\in B_{L^1(\nu_{\Phi,\mu})}.$ In particular, taking $f=\frac{1}{\|\nu_{\Phi,\mu}\|}\chi_G,$ we get the required inequality.

{\bf (iii)} Denseness of the range of $\jmath$ follows from the fact that the space of simple functions is dense in the both of the spaces w.r.t. their respective norms. Further, translation invariance follows from (\ref{L1norm}) and the fact that translation on $L^\Psi(G)$ is an isometry.
\end{proof}

\subsection{Compactness of the extended operator}
A function $f:G\rightarrow X$ is said to be Pettis integrable if $\langle f,x^\prime\rangle\in L^1(G),~x^\prime\in X^\prime$ and if for each $A\in\mathfrak{B}(G)$ there exists a unique $x_A\in X$ such that $\int_A\langle f,x^\prime\rangle\,dm_G=\langle x_A,x^\prime\rangle,~x^\prime\in X^\prime.$

We now characterize the compactness of the extended operator in terms of the measure $\nu_{\Phi,\mu}.$
\begin{thm}\label{ECC}
Let $\mu\in M(G).$ Then the following are equivalent:
\begin{enumerate}[{\bf (i)}]
\item The $L^\Phi(G)$-valued integral operator $I_{\nu_{\Phi,\mu}}$ is compact.
\item The measure $\nu_{\Phi,\mu}$ has finite variation.
\item  There exists a $L^\Phi(G)$-valued Bochner integrable function $F_1$ such that for every Borel set $A$ of $G,$ $$\nu_{\Phi,\mu}(A)=\int_AF_1\ dm_G.$$
\item There exists a $L^\Phi(G)$-valued Pettis integrable function $F_2$ such that for every Borel set $A$ of $G,$ $$\nu_{\Phi,\mu}(A)=\int_AF_2\ dm_G.$$
\item There exists a Borel set $A$ of $G$ such that $|\nu_{\Phi,\mu}|(A)$ is non-zero and finite.
\item There exists $f\in L^\Phi(G)$ such that $d\mu=fdm_G.$
\end{enumerate}
\end{thm}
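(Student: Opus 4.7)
The plan is to close the six conditions into a single equivalence by augmenting the chain $(\mathrm{ii})\Leftrightarrow(\mathrm{iii})\Leftrightarrow(\mathrm{vi})$ that is already available from Corollary~\ref{ECFV}, Corollary~\ref{IRVMCOM} and Theorem~\ref{RNTOS}. The elementary additions are $(\mathrm{iii})\Rightarrow(\mathrm{iv})$, which holds because every Bochner integrable function is Pettis integrable, and $(\mathrm{iii})\Rightarrow(\mathrm{v})$, which follows by taking $A=G$: Bochner integrability of $F_1$ forces $|\nu_{\Phi,\mu}|(G)\leq\int_G\|F_1\|_\Phi\,dm_G<\infty$, while positivity is guaranteed by the standing hypothesis $\mu(G)\neq 0$ together with Lemma~\ref{Semivarion_Formula}.

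To close back to $(\mathrm{vi})$ I would prove both $(\mathrm{iv})\Rightarrow(\mathrm{vi})$ and $(\mathrm{v})\Rightarrow(\mathrm{vi})$ by rerunning the argument of Theorem~\ref{RNTOS}. For $(\mathrm{iv})\Rightarrow(\mathrm{vi})$, the proof there goes through verbatim with $F_2$ in place of $F$: the only places where Bochner integrability of $F$ is used are to pull the bounded linear operator $P_\pi\circ\mathscr{F}_{\Phi,0}$ through the integral and to obtain the identity $\int_B\widetilde{F}_\pi\,dm_G=\int_B\widehat{\tau_t\mu}(\pi)\,dm_G$, both of which persist for Pettis integrals. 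For $(\mathrm{v})\Rightarrow(\mathrm{vi})$, fix $A$ with $0<|\nu_{\Phi,\mu}|(A)<\infty$. Reflexivity of $L^\Phi(G)$ supplies the Radon-Nikodym property, so the restriction $\nu_{\Phi,\mu}|_A$ admits a Bochner density $F_A:A\to L^\Phi(G)$ satisfying $\nu_{\Phi,\mu}(B)=\int_B F_A\,dm_G$ for Borel $B\subseteq A$. Running the Theorem~\ref{RNTOS} argument locally on $A$ (the separability of $F_A(A)$ still furnishes a countable $\Gamma\subseteq\widehat{G}$) yields $F_A(t)=\tau_t\mu$ for $m_G$-a.e.\ $t\in A$, after which the translation invariance of $L^\Phi(G)$ promotes $\tau_{t_0}\mu\in L^\Phi(G)$ for any such $t_0\in A$ to $\mu\in L^\Phi(G)$.

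For $(\mathrm{vi})\Rightarrow(\mathrm{i})$, write $d\mu=g\,dm_G$ with $g\in L^\Phi(G)$; Corollary~\ref{EO} and Proposition~\ref{BI}(ii) then represent the extended operator as $I_{\nu_{\Phi,\mu}}(f)=g*f=\int_G f(t)F_{\Phi,g}(t)\,dm_G(t)$. Since the $\Delta_2$-condition gives translation continuity on $L^\Phi(G)$, the map $F_{\Phi,g}:G\to L^\Phi(G)$ is norm continuous and $K:=F_{\Phi,g}(G)$ is compact. Theorem~\ref{inclusion}(i) bounds $\|f\|_1$ by a fixed constant multiple of $\|f\|_{\nu_{\Phi,\mu}}$, and a standard duality argument (for $x^*\in L^\Psi(G)$ with $\sup_{k\in K}|\langle k,x^*\rangle|\le 1$ one has $|\langle I_{\nu_{\Phi,\mu}}(f),x^*\rangle|\le\|f\|_1$) places $I_{\nu_{\Phi,\mu}}(f)$ in a fixed scalar multiple of $\overline{\mathrm{aco}}(K)$, which is compact by Mazur's theorem. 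The remaining implication $(\mathrm{i})\Rightarrow(\mathrm{ii})$ is the main obstacle. Here, $\|\chi_A\|_{\nu_{\Phi,\mu}}\le\|\nu_{\Phi,\mu}\|(G)<\infty$ uniformly in $A$, so compactness of $I_{\nu_{\Phi,\mu}}$ forces the range of $\nu_{\Phi,\mu}$ to be relatively norm compact in $L^\Phi(G)$; I would then invoke the classical result from vector-measure theory (available in \cite{DU,ORP}) that any vector measure with relatively norm compact range is representable as an indefinite Bochner integral $\nu=F\lambda$ against a scalar control measure $\lambda$ with $F\in L^1(\lambda;L^\Phi(G))$, whence $|\nu_{\Phi,\mu}|(G)=\int_G\|F\|_\Phi\,d\lambda<\infty$. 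This last step is the only one that transcends the convolution-specific machinery developed in the paper and is the part I expect to require the most care.
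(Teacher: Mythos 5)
Your cycle differs from the paper's mainly in two places, and both of the re-routed steps have problems; the most serious one is your treatment of (i)$\Rightarrow$(ii). You propose to deduce finite variation from the relative norm compactness of the \emph{range} of $\nu_{\Phi,\mu}$ (obtained by applying $I_{\nu_{\Phi,\mu}}$ to the bounded set $\{\chi_A\}$) together with a ``classical result'' asserting that a vector measure with relatively compact range is an indefinite Bochner integral and hence of finite variation. No such result exists: relative compactness of the range is equivalent to $\nu$ being a uniform limit of measures with finite--dimensional range, and this does not control the variation --- there are standard examples of $\sigma$-additive vector measures with relatively norm compact range and infinite variation. Compactness of the range of $\nu$ is strictly weaker than compactness of the integration operator $I_\nu$ on all of $L^1(\nu)$, and it is precisely the latter, stronger hypothesis that is needed. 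The paper closes this implication by citing \cite[Theorem 4]{ORR}, which is a genuinely nontrivial theorem about compact integration operators; your argument only uses $I_{\nu_{\Phi,\mu}}$ on characteristic functions and therefore cannot reach the conclusion.

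The second issue is your claim that the proof of Theorem \ref{RNTOS} ``goes through verbatim'' for a Pettis integrable $F_2$, giving (iv)$\Rightarrow$(vi) directly. The first step of that proof invokes the Pettis measurability theorem to produce a conull set on which $F$ has separable range, which is what yields the countable set $\Gamma\subseteq\widehat{G}$; this step requires strong measurability. A Pettis integrable function in the sense defined in the paper is only weakly measurable, and when $G$ is not metrizable $L^\Phi(G)$ need not be separable, so $F_2$ need not be essentially separably valued and $\Gamma$ need not exist. The paper avoids this by routing (iv)$\Rightarrow$(v) through Musia\l's theorem \cite[Corollary 1]{M} that an indefinite Pettis integral has $\sigma$-finite variation, and only then applying the Radon--Nikod\'ym property to get a genuinely Bochner (hence strongly measurable) density on a set of finite variation. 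Your (v)$\Rightarrow$(vi) and (iii)$\Rightarrow$(iv), (iii)$\Rightarrow$(v) steps are fine, and your direct proof of (vi)$\Rightarrow$(i) --- bounding $\|f\|_1$ by $\|f\|_{\nu_{\Phi,\mu}}$ via Theorem \ref{inclusion} and placing $I_{\nu_{\Phi,\mu}}(f)$ in a fixed multiple of $\overline{\mathrm{aco}}(F_{\Phi,g}(G))$, compact by Mazur --- is a correct and pleasantly self-contained alternative to the paper's appeal to \cite[Theorem 1]{ORR}. But as it stands the equivalence is not established, because (i)$\Rightarrow$(ii) is unproved and condition (iv) is not connected back to the cycle by a valid argument.
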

\begin{proof}
(i)$\Rightarrow$(ii) is a consequence of \cite[Theorem 4]{ORR}. 

(ii)$\Rightarrow$(iii) follows from Corollary \ref{ECFV} and Corollary \ref{IRVMCOM}.

(iii)$\Rightarrow$(iv) follows from the fact that a Bochner integrable function is Pettis integrable \cite[Pg. 149]{ORP}.

(iv)$\Rightarrow$(v) is a consequence of the fact that a vector measure defined by using a Pettis integrable function has $\sigma$-finite variation \cite[Corollary 1]{M}.

(v)$\Rightarrow$(vi). Let $A$ be a Borel subset subset of $G$ such that $0<|\nu_{\Phi,\mu}|(A)<\infty.$ Then, by \cite[Pg. 106]{ORP} and Lemma \ref{Semivarion_Formula}, $m_G(A)>0.$ As $L^\Phi(G)$ is reflexive, by \cite[Pg. 76, Corollary 13]{DU} applied to the measure $\nu_{\Phi,\mu}$ restricted to $A,$ we get a function $F:A\rightarrow L^\Phi(G)$ which is the Radon-Nikodym derivative of $\nu_{\Phi,\mu}$ restricted to $A$ w.r.t. the Haar measure $m_G.$ Now the argument given for Theorem \ref{RNTOS} can be modified to obtain the necessary conclusion.

(vi)$\Rightarrow$(i). By Corollary \ref{ECFV} the measure $\nu_{\Phi,\mu}$ has finite variation and by \cite[Pg. 46, Theorem 4]{DU} and Corollary \ref{IRVMCOF}, it follows that the variation of $\nu_{\Phi,\mu}$ is $\|f\|_\Phi m_G.$ Further, the Radon-Nikodym derivative $\frac{d\nu_{\Phi,\mu}}{d|\nu_{\Phi,\mu}|}$ is $\frac{F_{\Phi,f}}{\|f\|_\Phi}$ with a compact range. Thus the compactness of the extended operator $I_{\nu_{\Phi,\mu}}$ follows from \cite[Theorem 1]{ORR}.
\end{proof}
Our next characterization is in terms of the $L^1$-spaces.
\begin{cor}\label{ECCL1}
Let $\mu\in M(G).$ Then the following are equivalent:
\begin{enumerate}[{\bf (i)}]
\item The $L^\Phi(G)$-valued integral operator $I_{\nu_{\Phi,\mu}}$ is compact.
\item The spaces $L^1(G)$ and $L^1(|\nu_{\Phi,\mu}|)$ coincide.
\item The spaces $L^1(G)$ and $L^1(\nu_{\Phi,\mu})$ coincide.
\item The spaces $L^1(\nu_{\Phi,\mu})$ and $L^1(|\nu_{\Phi,\mu}|)$ coincide.
\end{enumerate}
\end{cor}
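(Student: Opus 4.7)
The plan is to establish the equivalence by showing that each of (ii), (iii), and (iv) is individually equivalent to (i), leveraging Theorem \ref{ECC}, which already identifies (i) with the existence of $f \in L^\Phi(G)$ satisfying $d\mu = f\,dm_G$.

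Starting from (i), suppose such an $f$ exists, so $\nu_{\Phi,\mu} = \nu_{\Phi,f}$. By Corollary \ref{IRVMCOF} this vector measure is represented by the Bochner integral of $F_{\Phi,f}(t) = \tau_t f$, and because left translation is an isometry on $L^\Phi(G)$, the map $t \mapsto \|F_{\Phi,f}(t)\|_\Phi$ is the constant $\|f\|_\Phi$. The standard identification of the variation of a vector measure defined by a Bochner integral (\cite[Pg.~46, Theorem 4]{DU}) then yields $|\nu_{\Phi,\mu}| = \|f\|_\Phi\, m_G$, which immediately gives (ii). Combining this with the general inclusion $L^1(|\nu_{\Phi,\mu}|) \subseteq L^1(\nu_{\Phi,\mu})$ (\cite[Pg.~116, Lemma 3.14]{ORP}) and with Theorem \ref{inclusion}(ii) produces the chain $L^1(G) = L^1(|\nu_{\Phi,\mu}|) \subseteq L^1(\nu_{\Phi,\mu}) \subseteq L^1(G)$, forcing all three spaces to coincide and thereby giving (iii) and (iv).

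The reverse implications (ii) $\Rightarrow$ (i) and (iv) $\Rightarrow$ (i) are then immediate. Indeed, $\chi_G$ is a simple function and so lies in $L^1(\nu_{\Phi,\mu})$, and it clearly lies in $L^1(G)$; either hypothesis therefore places $\chi_G$ in $L^1(|\nu_{\Phi,\mu}|)$, forcing $|\nu_{\Phi,\mu}|(G) < \infty$. Thus $\nu_{\Phi,\mu}$ has finite variation, and Theorem \ref{ECC} ((ii) $\Rightarrow$ (i)) yields (i).

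The remaining implication (iii) $\Rightarrow$ (i) is the main obstacle and requires a little more work. The continuous inclusion $L^1(\nu_{\Phi,\mu}) \hookrightarrow L^1(G)$ from Theorem \ref{inclusion}, together with the set-theoretic equality supplied by (iii), lets the closed graph theorem produce a constant $C > 0$ with $\|f\|_{\nu_{\Phi,\mu}} \leq C\|f\|_1$ for every $f \in L^1(G)$. By Corollary \ref{EO}, $I_{\nu_{\Phi,\mu}}(f) = \mu * f$ for all $f \in L^1(\nu_{\Phi,\mu}) = L^1(G)$, and using the standard estimate $\|I_{\nu_{\Phi,\mu}}(f)\|_\Phi \leq \|f\|_{\nu_{\Phi,\mu}}$ we obtain $\|\mu * f\|_\Phi \leq C\|f\|_1$ uniformly in $f \in L^1(G)$. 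Lemma \ref{RNDOSF} then produces $g \in L^\Phi(G)$ with $d\mu = g\,dm_G$, which is condition (vi) of Theorem \ref{ECC} and hence gives (i), completing the cycle.
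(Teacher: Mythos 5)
Your proof is correct and follows essentially the same route as the paper: Theorem \ref{ECC} identifies (i) with the existence of a density $f\in L^\Phi(G)$, the computation $|\nu_{\Phi,\mu}|=\|f\|_\Phi m_G$ gives (ii)--(iv), the converses for (ii) and (iv) reduce to finiteness of the variation, and (iii)$\Rightarrow$(i) passes through the uniform bound $\sup_{\|f\|_1\leq 1}\|\mu*f\|_\Phi<\infty$ and Lemma \ref{RNDOSF}. Your explicit appeal to the closed graph theorem to obtain that uniform bound is in fact a cleaner justification than the paper's bare assertion that the suprema over $B_{L^1(G)}$ and $B_{L^1(\nu_{\Phi,\mu})}$ agree, which tacitly requires exactly this norm equivalence.
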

\begin{proof}
(i)$\Rightarrow$(iv) follows from \cite[Theorem 1]{ORR}. Conversely, (iv) implies that $|\nu_{\Phi,\mu}|$ is finite as the constant function $1\in L^1(\nu_{\Phi,\mu}).$ Hence, by Theorem \ref{ECC}, (i) follows.

(i)$\Rightarrow$(ii). By Theorem \ref{ECC} there exists $f\in L^\Phi(G)$ such that $d\mu=fdm_G.$ Further, by \cite[Pg. 46, Theorem 4]{DU} and Corollary \ref{IRVMCOF}, it follows that the variation of $\nu_{\Phi,\mu}$ is $\|f\|_\Phi m_G.$ Hence (ii).

(ii)$\Rightarrow$(iii) follows from \cite[Pg. 116, Lemma 3.14]{ORP} and Theorem \ref{inclusion}. We now prove (iii)$\Rightarrow$(i). Let $f\in L^1(G).$ By our assumption, $f\in L^1(\nu_{\Phi,\mu})$ and hence, by Corollary \ref{EO}, $\mu*f\in L^\Phi(G).$ Note that, by \cite[Pg. 152]{ORP}, $$\underset{f\in B_{L^1(G)}}{\sup}\|\mu*f\|_\Phi=\underset{f\in B_{L^1(\nu_{\Phi,\mu})}}{\sup}\|I_{\nu_{\Phi,\mu}}(f)\|=\|I_{\nu_{\Phi,\mu}}\|=1.$$ Now the conclusion follows from Lemma \ref{RNDOSF} and Theorem \ref{ECC}.
\end{proof}
As our final result, we show that the optimal extension is genuine.
\begin{cor}
Let $G$ be an infinite compact group and let $\mu\in M(G)$ be such that $d\mu=fdm_G$ for some $f\in L^\Phi(G).$ Then the optimal extension of $T_{\Phi,\mu}$ is proper, i.e., $L^\Phi(G)$ is a proper subspace of $L^1(\nu_{\Phi,\mu}).$
\end{cor}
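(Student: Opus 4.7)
The plan is to invoke Corollary \ref{ECCL1} to identify $L^1(\nu_{\Phi,\mu})$ with $L^1(G)$ and then exploit the reflexivity gap between $L^\Phi(G)$ and $L^1(G)$, paralleling the argument used for the Fourier transform corollary at the end of Section 3.

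First, the hypothesis $d\mu = f\,dm_G$ with $f\in L^\Phi(G)$ is precisely condition (vi) of Theorem \ref{ECC}. Hence the extended operator $I_{\nu_{\Phi,\mu}}$ is compact, and the equivalence of (i) and (iii) in Corollary \ref{ECCL1} yields that the optimal domain $L^1(\nu_{\Phi,\mu})$ coincides with $L^1(G)$ (with equivalent norms). It therefore suffices to show that the continuous inclusion $L^\Phi(G) \subseteq L^1(G)$ is strict whenever $G$ is infinite.

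Since $(\Phi,\Psi)$ is a complementary pair with both functions satisfying the $\Delta_2$-condition throughout Section 4, the Orlicz space $L^\Phi(G)$ is reflexive by \cite[Pg. 112, Theorem 10]{RR}. On the other hand, $G$ being an infinite compact group forces the Haar measure $m_G$ to be atomless: a singleton of positive mass would, by translation invariance, give every singleton the same positive mass, contradicting $m_G(G) = 1$. By the standard splitting theorem for atomless probability measures, one may choose pairwise disjoint Borel sets $B_n$ with $m_G(B_n) = 2^{-n-1}$, and then the normalized indicators $2^{n+1}\chi_{B_n}$ span an isometric copy of $\ell^1$ inside $L^1(G)$, so $L^1(G)$ is non-reflexive. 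Consequently $L^\Phi(G) \neq L^1(G) = L^1(\nu_{\Phi,\mu})$, and since the inclusion $L^\Phi(G) \subseteq L^1(\nu_{\Phi,\mu})$ is continuous (Theorem \ref{LPhi2L1}), the strict inclusion follows.

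The only step that is not a direct citation is the non-reflexivity of $L^1(G)$ for infinite $G$, which reduces to the atomlessness of $m_G$; this is a standard consequence of translation invariance together with the normalization $m_G(G) = 1$, so no serious obstacle arises.
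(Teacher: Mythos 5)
Your proposal is correct and follows essentially the same route as the paper: condition (vi) of Theorem \ref{ECC} gives compactness, Corollary \ref{ECCL1} identifies $L^1(\nu_{\Phi,\mu})$ with $L^1(G)$, and the properness then comes from the reflexivity of $L^\Phi(G)$ versus the non-reflexivity of $L^1(G)$ for infinite $G$. The only difference is that you spell out why $L^1(G)$ fails to be reflexive (atomlessness of Haar measure and an embedded copy of $\ell^1$), a detail the paper leaves implicit.
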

\begin{proof}
By Theorem \ref{LPhi2L1}, $L^\Phi(G)\subseteq L^1(\nu_{\Phi,\mu}).$ Further, our assumption implies, by Theorem \ref{ECC} and Corollary \ref{ECCL1}, that $L^\Phi(G)\subseteq L^1(G).$ As $\Phi$ and $\Psi$ satisfy the $\Delta_2$ condition, $L^\Phi(G)$ is reflexive and as $G$ is infinite it follows that $L^\Phi(G)$ is a proper subspace of $L^1(G)$ and hence a proper subspace of $L^1(\nu_{\Phi,\mu}).$
\end{proof}

\section*{Acknowledgement}
The first author would like to thank the University Grants Commission, India, for providing the research grant.

\end{document}